  \pgfplotsset{%
    compat          = 1.17,
    colormap name   = viridis,
    axis line style = {black}%
  }
  \tikzset{external/system call = {%
    pdflatex \tikzexternalcheckshellescape
      -halt-on-error
      -interaction=batchmode
      -jobname "\image" "\texsource"}}
\newcommand{%
  \tikzexternalenable%
  \tikzsetnextfilename{}%
  \input{graphics/.tikz}%
  \tikzexternaldisable%
}[1]{%
  \tikzexternalenable%
  \tikzsetnextfilename{#1}%
  \input{graphics/#1.tikz}%
  \tikzexternaldisable%
}
\theoremstyle{plain}\newtheorem{theorem}{Theorem}
\newcommand{\R}{\ensuremath{\mathbb{R}}}
\newcommand{\N}{\ensuremath{\mathbb{N}}}
\newcommand{\trans}{\ensuremath{\mkern-1.5mu\mathsf{T}}}
\DeclareMathOperator{\logmean}{logmean}
\newcommand{\Vcal}{\ensuremath{\mathcal{V}}}
\newcommand{\Wcal}{\ensuremath{\mathcal{W}}}
\newcommand{\Xcal}{\ensuremath{\mathcal{X}}}
\newcommand{\Mcal}{\ensuremath{\mathcal{M}}}
\newcommand{\Mcalu}{\ensuremath{\Mcal_{\mathrm{u}}}}
\newcommand{\nh}{\ensuremath{N}}   
\newcommand{\nm}{\ensuremath{M}}   
\newcommand{\nmr}{\ensuremath{m}}  
\newcommand{\np}{\ensuremath{p}}   
\newcommand{\nr}{\ensuremath{n}}   
\newcommand{\nc}{\ensuremath{r}}   
\newcommand{\xs}{\ensuremath{\bar{x}}}
\newcommand{\us}{\ensuremath{\bar{u}}}
\newcommand{\thetar}{\psi}
\newcommand{\Thetar}{\Psi}
\newcommand{\pre}{\ensuremath{\mathrm{pre}}}
\newcommand{\Kt}{\ensuremath{K_{\mathrm{\theta}}}}
\newcommand{\Kr}{\ensuremath{\widetilde{K}}}
\newcommand{\Ktr}{\ensuremath{K_{\mathrm{\thetar}}}}
\newcommand{\Krr}{\ensuremath{\widetilde{K}_{\mathrm{\thetar}}}}
\newcommand{\xr}{\ensuremath{\tilde{x}}}
\newcommand{\ur}{\ensuremath{\tilde{u}}}
\newcommand{\Jx}{\ensuremath{J_{\mathrm{x}}}}
\newcommand{\Ju}{\ensuremath{J_{\mathrm{u}}}}
\newcommand{\Jxr}{\ensuremath{\widetilde{J}_{\mathrm{x}}}}
\newcommand{\Jur}{\ensuremath{\widetilde{J}_{\mathrm{u}}}}
\newcommand{\xc}{\ensuremath{\check{x}}}
\newcommand{\uc}{\ensuremath{\check{u}}}
\newcommand{\tf}{\ensuremath{t_{\mathrm{f}}}}
\newcommand{\ta}{\ensuremath{t_{\mathrm{a}}}}
\newcommand{\lambdacr}{\ensuremath{\tilde{\lambda}^{\mathrm{c}}}}
\newcommand{\lambdacs}{\ensuremath{\lambda_{\mathrm{s}}^{\mathrm{c}}}}
\newcommand{\lambdacu}{\ensuremath{\lambda_{\mathrm{u}}^{\mathrm{c}}}}
\newcommand{\Rn}{\ensuremath{R_{\mathrm{n}}}}
\newcommand{\Ocal}{\ensuremath{\mathcal{O}}}
\newcommand{\FOM}{\mbox{direct~DDPG}}
\newcommand{\Proj}{\mbox{UMPO}}
\newcommand{\ROM}{\mbox{UMPO-MA}}
\newcommand{\ROMProj}{\mbox{MF-UMPO}}
\newcommand{\algname}{\mbox{[MF-]UMPO}}
\let\oldnl\nl
\newcommand{\nonl}{\renewcommand{\nl}{\let\nl\oldnl}}
\definecolor{matlabblue}{HTML}{0072BD}
\definecolor{matlaborange}{HTML}{D95319}
\definecolor{matlabyellow}{HTML}{EDB120}
\definecolor{matlabpurple}{HTML}{7E2F8E}
\definecolor{matlabgreen}{HTML}{77AC30}
\definecolor{matlablightblue}{HTML}{4DBEEE}
\definecolor{matlabred}{HTML}{A2142F}
\newcommand{\plotfontsize}{\normalfont\footnotesize}
\newcommand{\legendfontsize}{\normalfont\footnotesize}
\tikzstyle{ghostline} = [
\tikzstyle{areafill} = [
\tikzstyle{lines} = [
\tikzstyle{linet} = [
\tikzstyle{col} = [
\begin{document}


\title{System stabilization with policy optimization on unstable
  latent manifolds}

\author[$\ast$]{Steffen W. R. Werner}
\affil[$\ast$]{%
  Department of Mathematics and Division of Computational Modeling
  and Data Analytics, Academy of Data Science, Virginia Tech, Blacksburg,
  VA 24061, USA.\authorcr
  \email{steffen.werner@vt.edu}, \orcid{0000-0003-1667-4862}%
}

\author[$\dagger$]{Benjamin Peherstorfer}
\affil[$\dagger$]{%
  Courant Institute of Mathematical Sciences, New York University,
  New York, NY 10012, USA.\authorcr
  \email{pehersto@cims.nyu.edu}, \orcid{0000-0002-1558-6775}%
}

\shorttitle{Unstable manifold policy optimization}
\shortauthor{S.~W.~R. Werner, B. Peherstorfer}
\shortdate{2024-07-08}
\shortinstitute{}

\keywords{%
  nonlinear systems,
  feedback stabilization,
  context-aware learning,
  neural networks,
  reinforcement learning
}

\msc{%
  37N35, 
  68T07, 
  90C30, 
  93C57, 
  93D15  
}

\abstract{%
  Stability is a basic requirement when studying the behavior of
  dynamical systems.
  However, stabilizing dynamical systems via reinforcement learning is
  challenging because only little data can be collected over short time
  horizons before instabilities are triggered and data become meaningless.
  This work introduces a reinforcement learning approach that is formulated
  over  latent manifolds of unstable dynamics so that stabilizing policies can
  be trained from few data samples.
  The unstable manifolds are minimal in the sense that they contain the lowest
  dimensional dynamics that are necessary for learning policies
  that guarantee stabilization.
  This is in stark contrast to generic latent manifolds that aim to approximate
  all---stable and unstable---system dynamics and thus are higher dimensional
  and often require higher amounts of data.
  Experiments demonstrate that the proposed approach stabilizes even complex
  physical systems from few data samples for which other methods that operate
  either directly in the  system state space or on generic latent manifolds
  fail.
}

\novelty{}

\maketitle


\section{Introduction}%
\label{sec:intro}

Dynamical systems are core building blocks for describing the time-dependent
behavior of phenomena of interest in science and engineering, including
physical relationships~\cite{BroT11}, engineering systems~\cite{KarMR12}
as well as agents acting in environments in social sciences~\cite{Sil18}.
A major task is controlling dynamical systems so that they are steered towards
desired states.
In this work, we consider the specific control task of stabilizing dynamical
systems, which means that states have to remain in the same neighborhood of the
desired system behavior despite small perturbations~\cite{Loc01, NijV16}.
Stability is an important property because reasonable actions in dynamic
environments should avoid that the system leaves the desired behavior.
In many cases, stability is even a prerequisite for meaningful predictions
about how the system states evolve in the future.
One approach for stabilizing systems is via feedback control, which means that
external inputs are constructed based on the current state of the system such
that the future system states remain within the neighborhood of the desired
steady state (equilibrium point).

Classical control techniques rely on the governing equations of the
dynamics~\cite{BruK19}.
In contrast, data-driven control methods~\cite{BruK19, FliJ13, ZieN42} and
reinforcement learning~\cite{SutB18} learn control policies from data.
Despite immense progress on reinforcement learning and related machine learning
methods for control, stabilizing systems from data remains challenging.
First, recall that the aim is to stabilize dynamical systems and thus it is
reasonable to expect that the open-loop systems without stabilizing policies are
unstable.
Therefore, systems can be queried only for short time horizons before
instabilities are triggered, which makes collecting state trajectories that are
informative about the system challenging.
In fact, intermediate policies during the learning process often lead to
additional destabilization, which makes data collection even more challenging.
Second, the dimension of the parametrization, e.g., the number of weights and
biases of a deep-network parametrization of a policy in reinforcement
learning, typically grows with the complexity of the dynamics described by the
system.
In particular, the number of parameters grows with the state dimension and the
number of control inputs.
This means that the optimization space, over which a policy is trained, can
be rich and thus large amounts of data are typically required to find
informative search directions, which starkly conflicts with the challenge
mentioned above that collecting data samples of unstable systems can be
problematic~\cite{BruPK16, Rec19}.
Third, simulating or querying systems can be expensive.
Either because numerical simulations have to be performed or because physical
experiments need to be conducted.
In any case, each query to collect a data point is expensive and thus typically
only few data points can be collected.
There are many methods for learning surrogate models from data, e.g.,
dynamic mode decomposition~\cite{BruBPetal16, KutBBetal16, Mez05, RowMBetal09,
Sch10, TuRLetal14, WilKR15},
operator inference~\cite{KraPW24, Peh20, PehW16, QiaKPetal20, SawKP23, ShaWK22},
sparse identification methods~\cite{BruPK16, SchTW18},
the Loewner framework~\cite{AntGI16, DrmP22, GosA18, MayA07, SchU16,
SchUBetal18},
and others~\cite{BerP24, KadOFetal21, VlaAUetal22}; however, having only few
data points also means that training such data-driven surrogate models is
challenging and often intractable.

\begin{figure*}[!t]
  \centering
  \resizebox{\linewidth}{!}{%
  \tikzexternalenable%
  \tikzsetnextfilename{overview}%
  \input{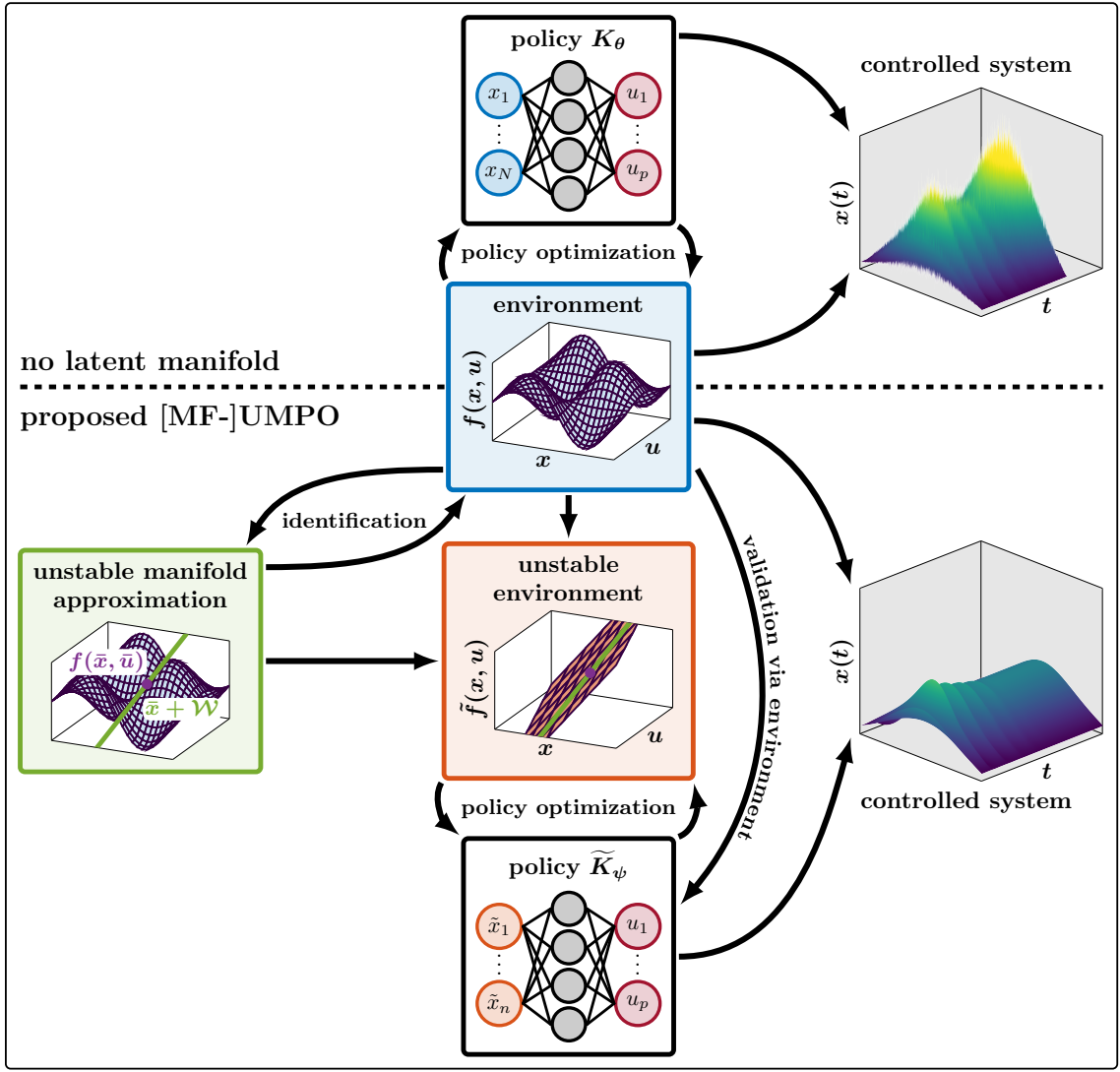}%
  \tikzexternaldisable%
}
  \caption{Reinforcement learning methods that optimize policies without
    unstable manifolds (shown above the dashed line) directly
    query the dynamical environment in its original, high-dimensional state
    representation.
    In contrast, the new \algname{} methods (shown below the dashed line)
    consider the dynamics over the low-dimensional unstable manifold instead,
    which reduces the dimension over which the policy optimization has to
    act and it reduces the complexity of the task at hand by ignoring
    dynamical behavior that is irrelevant for stabilization.}
  \label{fig:overview}
\end{figure*}

In this work, we propose to first learn a low-dimensional manifold on which
exactly those dynamics of the system evolve that are important for
stabilization, and then to learn a policy for stabilizing the dynamics on
that manifold only; see \Cref{fig:overview} for a comparison between our
proposed method and traditional reinforcement learning.
The key insight is that the latent manifolds that are typically used for finding
low-dimensional models in model reduction, e.g., via training
autoencoders~\cite{Bal12, KadBCetal22, KimCWetal22} on state trajectories,
quadratic manifolds~\cite{BarF22, GeeWW23, SchP24}, or
even just computing the principal components~\cite{BenGW15},
are poorly suited for stabilization.
Instead, we restrict the system to the manifold of unstable
dynamics~\cite{Kra91, WerP23a} that captures exactly those dynamics that are
relevant for the task of stabilization.
Because of the low dimensionality of the unstable manifold in many applications
of interest---see~\cite{WerP23a} and \Cref{fig:example_dimensions}---we
can use low-dimensional parametrizations such as deep networks with few
parameters for the policies, which reduce the amount of computational
resources needed for the training process.
Additionally, the learning on unstable manifolds can be combined with
multi-fidelity concepts~\cite{PehWG18} to first pre-train a policy
on a cheap-to-evaluate approximation of the unstable dynamics before the
policy is fine-tuned and certified on the actual system of interest;
see~\cite{WerOP23} for a multi-fidelity optimization approach for robust
control.
The multi-fidelity learning further reduces the computational costs as the
pre-training is independent of the potentially expensive evaluations of the
actual system of interest, which has typically high-dimensional states.

The manuscript is organized as follows.
In \Cref{sec:preliminaries}, we define the setup and provide a problem
formulation.
We then show in \Cref{sec:unstablearn} that latent manifolds that were
trained to generically approximate the system dynamics can be inefficient for
stabilization.
Our approach is introduced in \Cref{sec:mllearn} and numerical
experiments are shown in \Cref{sec:numerics}.
The work is concluded in \Cref{sec:conclusions}.

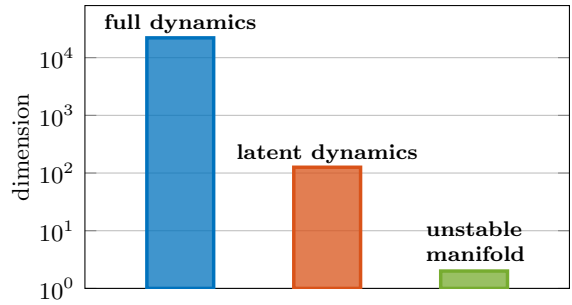
\begin{figure}[t]
  \centering
  \tikzexternalenable%
  \tikzsetnextfilename{dimensions}%
  \begin{tikzpicture}[font = \plotfontsize, text = black]
  \begin{axis}[
    scale only axis,
    width            = .425\linewidth,
    height           = .25\linewidth,
    ybar             = 30pt,
    bar width        = 25pt,
    ymode            = log,
    ymin             = 1,
    ymax             = 8e+04,
    ylabel           = {dimension\vphantom{Pp}},
    ylabel style     = {yshift = -.5em},
    minor tick style = {draw = none},
    ymajorgrids,
    xmin             = 0.9,
    xmax             = 1.1,
    xtick            = {1},
    xticklabels      = {},
    xtick style      = {draw = none},
    cycle list name  = barlist
  ]

    \addplot+[ybar] coordinates{
      (1, 22060)
    };
    \node[xshift = -55pt] at (1, 39000)
      {\scriptsize \textbf{full~dynamics}};

    \addplot+[ybar] coordinates{
      (1, 126)
    };
    \node[xshift = 0pt] at (1, 220)
      {\scriptsize \textbf{latent~dynamics}};

    \addplot+[ybar] coordinates{
      (1, 2)
    };
    \node[xshift = 69pt] at (1, 6.5)
      {\scriptsize
      \begin{minipage}{.15\textwidth}
        \bfseries unstable\\manifold
      \end{minipage}};

  \end{axis}
\end{tikzpicture}%
  \tikzexternaldisable%

  \caption{Unstable manifolds can have much lower dimension that standard
    latent manifolds:
    For the flow past a cylinder described by the Navier-Stokes
    equations, the dimension of the unstable manifold is orders of magnitude
    lower than of a manifold that generically approximates latent dynamics;
    see~\cite{WerP23a}.}
  \label{fig:example_dimensions}
\end{figure}


\section{Stabilizing dynamical systems}%
\label{sec:preliminaries}

In this section, we first introduce the setup for this work before we provide a
problem formulation.


\subsection{Setup}%
\label{sec:setup}

We denote discrete-time dynamical systems as
\begin{equation} \label{eqn:sys}
  x(t + 1) = f(x(t), u(t)), \qquad t \in \N \cup \{ 0 \},
\end{equation}
with initial condition $x(0) \in \Xcal_{0} \subseteq \R^{\nh}$,
$\nh$-dimensional internal states $x(t) \in \R^{\nh}$ and
$\np$-dimensional external control $u(t) \in \R^{\np}$.
The function $f$ can be nonlinear in both arguments.
The task that we consider here is the construction of a policy $K$
that maps the state $x(t)$ to an input $u(t)$  so that the
system~\cref{eqn:sys} is locally stabilized towards a desired equilibrium
point $(\xs, \us)$ for which we have $\xs = f(\xs, \us)$.
That is, the closed-loop system
\begin{equation*}
  x(t + 1) = f(x(t), K(x(t)))
\end{equation*}
converges to the steady state $x(t) \to \xs$ in the limit $t \to \infty$, if the
initial state $x(0)$ lies close enough to the steady state $\xs$ with
$\lVert x(0) - \xs \rVert \leq \epsilon$ for an $\epsilon > 0$.
Notice that in the context of control, the policy $K$ is typically referred to
as state-feedback controller.
For the construction of the policy $K$, we can query the
system~\cref{eqn:sys} at admissible initial conditions $x(0) \in \Xcal_{0}$
and control trajectories $u(0), u(1), \ldots, u(\tf - 1)$ to observe
state trajectories $x(1), x(2), \ldots, x(\tf)$ up to some final time $\tf$.


\subsection{Problem formulation}%
\label{sec:problem}

For stabilization via reinforcement learning, we introduce a reward function
$r\colon \R^{\nh} \times \R^{\np} \to \R$ so that $r(x(t), u(t))$ penalizes the
distance of $x(t)$ and $u(t)$ from the considered steady state $(\xs, \us)$.
We parametrize the policy $\Kt\colon \R^{\nh} \to \R^{\np}$ over the set of
$\nm$-dimensional parameters $\Theta \subseteq \R^{\nm}$ and we then seek an
$\nm$-dimensional parameter $\theta_{\ast} \in \Theta$ that maximizes the
accumulated reward:
\begin{equation}\label{eqn:Prelim:RLObj}
  \theta_{\ast} \in \arg\max\limits_{\theta \in \Theta}
    \sum\limits_{t = 0}^{\tf} r\big( x(t),\Kt(x(t)) \big).
\end{equation}
This is a challenging problem.
First, because the task is stabilization, we reasonably expect the open-loop
system~\cref{eqn:sys} without policy to be unstable and therefore it can
be queried only for a short time before instabilities are triggered.
Thus, collecting state trajectories is limited due to the instability.
In fact, policies $\Kt$ corresponding to intermediate parameters $\theta$
during the optimization process typically lead to destabilization,
which makes data collection even more challenging.
Second, simulating or querying the system~\cref{eqn:sys} can be expensive.
Third, the number of components $\nm$ of $\theta$ typically grows with the
complexity of the dynamics described by the system~\cref{eqn:sys}; in
particular with the state dimension~$\nh$ and the number of
control inputs~$\np$.
Therefore, the search space $\Theta$ can be rich and thus large amounts of data
are typically required to find informative search directions.


\section{Learning on the manifold of unstable dynamics}%
\label{sec:unstablearn}

We propose to learn a low-dimensional manifold in which the dynamics of
the system~\cref{eqn:sys} evolve that are important for stabilization, and then
to learn a policy $\Kt$ for dynamics on the manifold only.
The key insight is that latent manifolds that are typically used for finding
low-dimensional latent models and model reduction (e.g., via training
autoencoders on state trajectories or even just computing the principal
components) are poorly suited for stabilization and instead one should restrict
the system to the manifold of unstable dynamics that captures exactly those
dynamics that are relevant for the task of stabilization.


\subsection{Stabilization on latent manifolds}%
\label{sec:manifolds}

We denote a latent manifold of the $\nh$-dimensional state space of
system~\cref{eqn:sys} as
\begin{equation} \label{eqn:latentm}
  \Mcal = \left\{ (D \circ E)(z)\colon z \in \R^{\nh} \right\},
\end{equation}
which is defined via the encoder $E\colon \R^{\nh} \to \R^{\nr}$ and
decoder $D\colon \R^{\nr} \to \R^{\nh}$.
To give a brief outlook, in \Cref{sec:rlstab}, we will learn a policy
$\Krr: \R^{\nr} \to \R^{\np}$ that acts on $\nr$-dimensional elements of
the encoded manifold $E(\Mcal)$ such that $\Ktr = \Krr \circ E$ stabilizes the
high-dimensional system~\cref{eqn:sys};
at least locally about the steady state $(\xs, \us)$.
The parametrization $\thetar \in \Thetar \subseteq \R^{\nmr}$ of $\Krr$ is
independent of the dimension $\nh$ of the high-dimensional state $x(t)$ and
instead depends on the dimension $\nr$ of the manifold $\Mcal$.
In the case that $\nr \ll \nh$, this reduces the search space of admissible
parametrizations from $\Theta$ with dimension $\nm$ to $\Thetar$ with dimension
$\nmr \ll \nm$ and, consequently, the computational costs of learning $\Ktr$.

In \Cref{sec:mrlstab}, we will go one step further and additionally
restrict the system dynamics of~\cref{eqn:sys} onto $\Mcal$ by considering the
states $\xr(t) \in \R^{\nr}$ with dimension $\nr \ll \nh$ given by the latent
model
\begin{equation} \label{eqn:ManifoldModel}
  \xr(t + 1) = E \circ f(D(\xr(t)), u(t)),
\end{equation}
Then, the policy $\Krr$ will be trained on the latent
model~\cref{eqn:ManifoldModel}, which avoids having to query the potentially
more expensive, high-dimensional model~\cref{eqn:sys}.
However, additional conditions on $E$ and $D$ are necessary such that the
dynamics described by~\cref{eqn:ManifoldModel} are informative enough for the
stabilization of~\cref{eqn:sys} by $\Ktr = \Krr \circ E$.
Additionally, due to computational inexactness, limited data and the
finite-dimensional parametrizations of the complex manifolds that hold the
dynamics important for stabilization, one can trust the latent
model~\cref{eqn:ManifoldModel} in actual numerical computations only in limited
cases and, therefore, recourse to the high-dimensional model~\cref{eqn:sys}
will certify the stabilization;
see multi-fidelity methods~\cite{PehWG18, WerOP23}.


\subsection{Why not every latent manifold is a good choice for stabilization}%
\label{sec:badman}

We now discuss that latent manifolds $\Mcal$ that describe well essential
system dynamics so that the states $\xr(t)$ of the latent
model~\cref{eqn:ManifoldModel} approximate the states $x(t)$ of the
high-dimensional model~\cref{eqn:sys} can still lead to catastrophic errors
when used for stabilization.
A well-known approach to compute suitable latent manifolds to approximate the
system dynamics in the special case of linear systems is the balanced
truncation method~\cite{Moo81}.
However, the system needs to be known for this method and only under strict
assumptions, these manifolds are suitable for the task of
stabilization~\cite{BenHW22, MusG91}.
In the following, we consider the sampling-based approach of principal
components, which can lead to good approximations of the system dynamics in
general but either ignore or overfit the unstable behavior.


\begin{figure*}[t]
  \centering
  \begin{subfigure}[b]{.49\linewidth}
    \centering
  \tikzexternalenable%
  \tikzsetnextfilename{example_stabeig3}%
  \begin{tikzpicture}[font = \plotfontsize, text = black]
  \pgfplotstableread{graphics/data/example_stabeig3.dat}\tableDATA

  \begin{axis}[%
    scale only axis,
    width           = .775\linewidth,
    height          = .35\linewidth,
    xmin            = -0.2,
    xmax            = 0.0,
    ymin            = 0.9,
    ymax            = 1.15,
    xminorticks     = false,
    yminorticks     = false,
    scaled x ticks  = false,
    xticklabels     = {x, -0.2, -0.15, -0.1, -0.05, 0},
    xlabel          = {controller parameter $\thetar$},
    ylabel          = {eigenvalue magnitude},
    ylabel style    = {yshift = -.4em},
    xlabel style    = {yshift = .4em},
    legend columns  = -1,
    legend style    = {
      at     = {(.5, -.35)},
      font   = \legendfontsize,
      anchor = north,
      /tikz/every even column/.append style = {column sep = .05in}},
    legend cell align = {left},
    clip mode         = individual
  ]

    \addplot[
      lines,
      matlabblue,
      mark         = *,
      mark options = {fill = matlabblue},
      mark phase   = 0,
      mark repeat  = 38
    ] table[x index = 0, y index = 1]{\tableDATA};
    \addlegendentry{\scriptsize $\lambdacr$}

    \addplot[
      lines,
      matlaborange,
      mark         = square*,
      mark options = {fill = matlaborange},
      mark phase   = 0,
      mark repeat  = 38
    ] table[x index = 0, y index = 3]{\tableDATA};
    \addlegendentry{\scriptsize $\lambdacu$}

    \addplot[
      lines,
      matlabgreen,
      mark         = triangle*,
      mark options = {fill = matlabgreen},
      mark phase   = 0,
      mark repeat  = 38
    ] table[x index = 0, y index = 2]{\tableDATA};
    \addlegendentry{\scriptsize $\lambdacs$}

    \addplot[lines, black, dashed] coordinates{(-0.2, 1.0) (0.0, 1.0)};
    \addlegendentry{\scriptsize stability border}
  \end{axis}
\end{tikzpicture}%
  \tikzexternaldisable%

    \vspace{-\baselineskip}
    \caption{High-dimensional and latent controlled system modes.}
    \label{fig:example_pca_eig}
  \end{subfigure}%
  \hfill%
  \begin{subfigure}[b]{.49\linewidth}
    \centering
  \tikzexternalenable%
  \tikzsetnextfilename{example_stabeig3_sim}%
  \begin{tikzpicture}[font = \plotfontsize, text = black]
  \begin{axis}[
    scale only axis,
    width                  = .76\linewidth,
    xmin                   = -0.2,
    xmax                   = 0,
    ymin                   = 0,
    ymax                   = 100,
    zmin                   = -0.1946,
    zmax                   = 3.8797,
    grid                   = none,
    xlabel                 = {$\thetar$},
    ylabel                 = {$t$},
    zlabel                 = {$\log_{10}(\lVert x(t) -\xs \rVert_{2})$},
    xlabel style           = {yshift = 1em},
    ylabel style           = {yshift = 1em},
    zlabel style           = {yshift = .25em},
    axis background/.style = {fill = black!10}
  ]
    \addplot3 graphics[points = {
      (0, 2, -0.1031) => (331.9, 237.4-235.4)
      (-0.2, 0, -0.0431) => (2, 237.4-174.7)
      (-0.198, 85, 0.3915) => (278.6, 237.4-105.1) 
      (0, 100, 3.8796) => (649, 237.4-2.2)
    }]{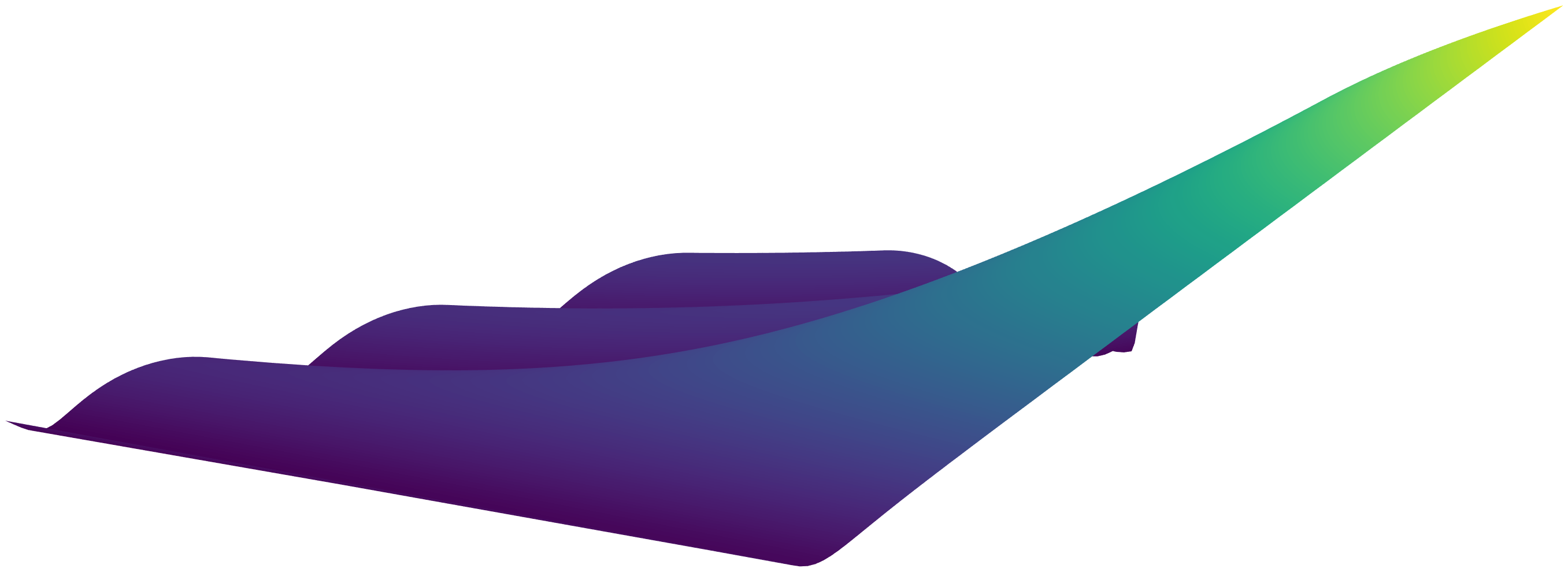};
  \end{axis}
\end{tikzpicture}%
  \tikzexternaldisable%

    \vspace{0\baselineskip}
    \caption{Growing log-magnitudes of controlled simulations.}
    \label{fig:example_pca_sim}
  \end{subfigure}

  \caption{PCA subspaces are insufficient for stabilization:
    While the latent model created from a fully converged low-dimensional PCA
    subspace has the same unstable eigenvalue as the true system, its controlled
    behavior does not coincide  with the true system steered
    via the corresponding decoded policy, since the controlled
    latent eigenvalue $\lambdacr$ does not change, while the controlled
    true eigenvalues $\lambdacu$ and $\lambdacs$ do (see (a)).
    Tuning the policy parameter $\thetar$ does not stabilize
    the true system since $\lambdacu$ and $\lambdacs$ do not decrease
    below the stability border and the simulation trajectories (see (b))
    do not converge to the steady state but oscillate or even grow in
    magnitude.}
  \label{fig:example_pca}
\end{figure*}

\paragraph{Principal components can lead to unsuitable latent manifolds for
stabilization.}%
A classical approach to find a suitable manifold $\Mcal$ for the system dynamics
of~\cref{eqn:sys} is to train the manifold so that the embedding error
$\lVert (D \circ E)(x(t)) - x(t) \rVert$ is small.
We consider the linear manifold of dominant dynamics $\Vcal$ that minimizes the
embedding error, which is given by the principal component analysis
(PCA)~\cite{Pea01, Sch10, TuRLetal14}.
We collect the principal components as orthogonal columns in
$V \in \R^{\nh \times \nr}$ to obtain the encoder and decoder as
\begin{equation*}
  D_{\Vcal}(\xr(t)) = V \xr(t) \quad\text{and}\quad
  E_{\Vcal}(x(t)) = V^{\trans} x(t).
\end{equation*}
We use principal components just for ease of exposition here in this motivating
example but the insights apply to nonlinear embeddings as well.
Consider the linear system $x(t + 1) = f(x(t), u(t)) = A x(t) + B u(t)$ given by
\begin{equation} \label{eqn:example}
  A = \begin{bmatrix} 0.9 & 0 \\ \varepsilon & 1.1 \end{bmatrix}, \quad
  B = \begin{bmatrix} 1 \\ 0 \end{bmatrix}, \quad
  x(0) = \begin{bmatrix} 0 \\ 0 \end{bmatrix},
\end{equation}
with $\varepsilon > 0$.
The system in~\cref{eqn:example} has one stable and one unstable eigenvalue
(system mode), where the parameter $\varepsilon$ allows to adjust the strength
of the coupling between the corresponding dynamics.
Mathematically, the strength of the coupling corresponds to the angle
between the left and right eigenspaces of the unstable eigenvalue.
A small angle means that principal components mostly ignore the directions in
which the system gets unstable; a larger angle means that unstable directions
are captured by principal components but result in unwanted influence of the
policy on the stable directions, even leading to destabilization
of that system component.
The policy for the latent model is parametrized via
$\Krr(\xr) = \thetar \xr$, with the policy parameter $\thetar \in \R$.
For $\varepsilon = 0.1$, \Cref{fig:example_pca} illustrates that
no policy formulated over the PCA subspace is capable of stabilizing the true
system~\cref{eqn:example} of interest.
This can be seen clearly in \Cref{fig:example_pca_sim} where for none of the
policy parameters $\thetar$, the trajectories converge to the steady state
$\xs$ but rather oscillate or even diverge indicated by the growing
magnitudes.
The underlying problem with the PCA-based manifold is revealed in
\Cref{fig:example_pca_eig}.
The eigenvalues of the controlled systems shown here are denoted by
\begin{equation*}
  \lambdacr = \Lambda(\Jxr + \Jur \Krr(1)) \quad\text{and}\quad
    \{ \lambdacs, \lambdacu \} = \Lambda(A + B \Krr \circ E_{\Vcal}(I_{2})),
\end{equation*}
where $\Jxr = V^{\trans} A V$, $\Jur = V^{\trans} B$,
$E_{\Vcal}(I_{2})$ denotes the column-wise application of $E_{\Vcal}$ to the
two-dimensional identity matrix $I_{2}$, and $\lambdacs$ and $\lambdacu$
correspond to the stable and unstable eigenvalues of the original
system~\cref{eqn:example}, respectively.
While the corresponding latent model that lives on $\Mcal$ has the same
unstable system mode as the original system~\cref{eqn:example}, the influence
of policies on the latent model corresponding to PCA appears to be strongly
different from its effect on the true system.
In fact, there is no visible area in \Cref{fig:example_pca_eig} in which
both the controlled eigenvalues $\lambdacu$ and $\lambdacs$ decrease below the
stability border, which would indicate the stability of the true controlled
system by the low-dimensional policy, and the latent model mode $\lambdacr$ is
stabilized in a different region of the parameter $\thetar$ that is not shown in
\Cref{fig:example_pca_eig}.
Due to monotinicity of the system modes, the true system of interest cannot be
stabilized in the parameter region, where the latent model is stabilized, and
vice versa.


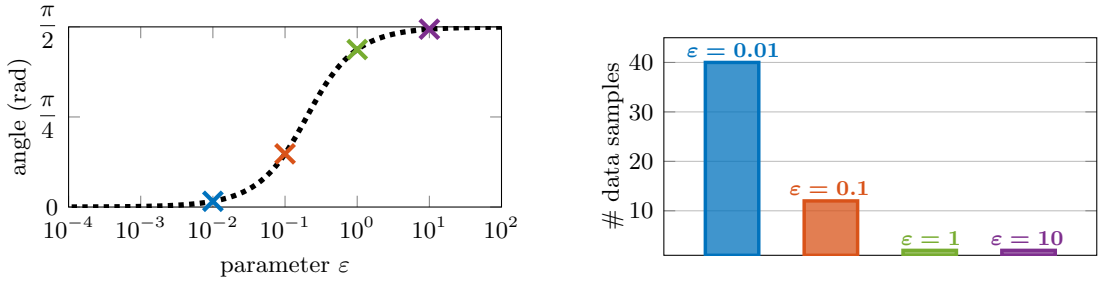
\begin{figure*}[t]
  \centering
  \begin{subfigure}[b]{.49\linewidth}
    \centering
  \tikzexternalenable%
  \tikzsetnextfilename{example_angles}%
  \begin{tikzpicture}[font = \plotfontsize, text = black]
  \pgfplotstableread{graphics/data/example_angles.dat}\tableDATA
  
  \begin{semilogxaxis}[%
    scale only axis,
    width           = .775\linewidth,
    height          = .325\linewidth,
    xmin            = 1e-4,
    xmax            = 1e+2,
    ymin            = 0,
    ymax            = 1.5708,
    ytick           = {0, 0.7854, 1.5708},
    yticklabels     = {$0$,
                       $\displaystyle \frac{\pi}{4}$,
                       $\displaystyle \frac{\pi}{2}$},
    xminorticks     = false,
    yminorticks     = false,
    scaled x ticks  = false,
    xlabel          = {parameter $\varepsilon$},
    ylabel          = {angle (rad)},
    ylabel style    = {yshift = -.5em}
  ]

    \addplot[lines, black, dotted] table[x index = 0, y index = 1]{\tableDATA};

    \addplot[mark = x, mark options = {scale = 2.5, ultra thick}, ultra thick,
      only marks, matlabpurple] coordinates {(10, 1.5508)};

    \addplot[mark = x, mark options = {scale = 2.5, ultra thick}, ultra thick,
      only marks, matlabgreen] coordinates {(1, 1.3734)};

    \addplot[mark = x, mark options = {scale = 2.5, ultra thick}, ultra thick,
      only marks, matlaborange] coordinates {(0.1, 0.4636)};

    \addplot[mark = x, mark options = {scale = 2.5, ultra thick}, ultra thick,
      only marks, matlabblue] coordinates {(0.01, 0.0500)};
  \end{semilogxaxis}
\end{tikzpicture}%
  \tikzexternaldisable%

    \caption{Distance between PCA and unstable manifold.}
    \label{fig:example_angles}
  \end{subfigure}%
  \hfill%
  \begin{subfigure}[b]{.49\linewidth}
    \centering
  \tikzexternalenable%
  \tikzsetnextfilename{example_data}%
  \begin{tikzpicture}[font = \plotfontsize, text = black]
  \begin{axis}[
    scale only axis,
    width            = .775\linewidth,
    height           = .3925\linewidth,
    ybar             = 17pt,
    bar width        = 20pt,
    ymin             = 1,
    ymax             = 45,
    ylabel           = {\# data samples\vphantom{Pp}},
    ylabel style     = {yshift = -.5em},
    minor tick style = {draw = none},
    ymajorgrids,
    xmin             = 0.9,
    xmax             = 1.1,
    xtick            = {1},
    xticklabels      = {},
    xtick style      = {draw = none},
    cycle list name  = barlist
  ]

    \addplot+[ybar] coordinates{
      (1, 40)
    };
    \node[xshift = -55pt] at (1, 42.5)
      {\scriptsize \textcolor{matlabblue}{$\boldsymbol{\varepsilon = 0.01}$}};

    \addplot+[ybar] coordinates{
      (1, 12)
    };
    \node[xshift = -19pt] at (1, 14.5)
      {\scriptsize \textcolor{matlaborange}{$\boldsymbol{\varepsilon = 0.1}$}};
    
    \addplot+[ybar] coordinates{
      (1, 2)
    };
    \node[xshift = 19pt] at (1, 4.5)
      {\scriptsize \textcolor{matlabgreen}{$\boldsymbol{\varepsilon = 1}$}};

    \addplot+[ybar] coordinates{
      (1, 2)
    };
    \node[xshift = 55pt] at (1, 4.5)
      {\scriptsize \textcolor{matlabpurple}{$\boldsymbol{\varepsilon = 10}$}};
    
  \end{axis}
\end{tikzpicture} %
  \tikzexternaldisable%

    \caption{Number of data samples for PCA to find instability.}
    \label{fig:example_data}
  \end{subfigure}

  \caption{Increasing the distance between the PCA and the manifold of
    unstable system dynamics makes the PCA manifold less suited for the task
    of stabilization.
    Via the dynamics coupling in this example~\cref{eqn:example}, the number of
    data samples needed to identify any instabilities on the PCA manifold
    increases by $20\times$ from $\varepsilon = 0.01$ to $\varepsilon = 10$.}
  \label{fig:example_dist_data}
\end{figure*}

\paragraph{Large amounts of data are required to compensate for poorly suited
latent manifolds.}%
Not targeting the unstable dynamics directly translates into requiring large
amounts of trajectory data to capture the dynamics necessary for stabilization.
Even when PCA captures the unstable dynamics eventually, building a policy
in the PCA subspace can lead to a pollution effect in the sense that the learned
policy also affects stable dynamics in undesired ways as shown in
\Cref{fig:example_pca}; see~\cite{WerP24}.
\Cref{fig:example_dist_data} shows the distance between the PCA-based
manifold of system dynamics and the manifold of unstable dynamics, which is
needed for the design of stabilizing policies, and the corresponding amounts
of data needed for PCA to identify any instability.
With increasing distance between the two manifolds, given in terms of
subspace angles in \Cref{fig:example_angles}, the PCA becomes less suitable
for the design of stabilizing policies as the amount of data needed to
identify instabilities increases; see \Cref{fig:example_data}.
Sometimes this can be compensated using even more data samples in the learning
process of the policy, but there are no guarantees as, for example,
\Cref{fig:example_pca} indicates that there is no stabilizing policy on
the PCA manifold.


\subsection{The latent manifold of unstable dynamics}%
\label{sec:umanifold}

We propose to consider the dynamics relevant for the task of stabilization,
which means the purely unstable system dynamics, instead of aiming to find a
latent manifold $\Mcal$ that approximates all system dynamics in general.
The manifold of unstable dynamics about the steady state $(\xs, \us)$ is
defined as
\begin{equation} \label{eqn:unstabman}
  \Mcalu(\xs, \us) = \left\{ x(t) \in \R^{\nh}\colon
    \lim\limits_{t \to \infty} \phi^{-t}(x(t)) = \xs ~\text{for}~
    u(t) = \us \right\},
\end{equation}
where $\phi^{-t}$ is the inverse flow map of~\cref{eqn:sys} that goes backwards
in time~\cite{KraODetal05, ZieDG19}.
Thus, the manifold $\Mcalu(\xs, \us)$ includes the trajectories that
originate arbitrarily close to the steady state $(\xs, \us)$ but can
diverge from it over time.
For the low-dimensional representation of the unstable
manifold~\cref{eqn:unstabman}, we need to find encoder $E$ and decoder $D$ that
fully characterize its dynamics.
Therefore, the time evolution in~\cref{eqn:ManifoldModel} is the restriction of
the states $x(t)$ such that $\xr(t) = E(x(t))$ holds for all $x(t) \in \Mcalu$.
This exact representation yields the equation
\begin{equation*}
  E(x(t+1)) = E \circ f((D \circ E)(x(t)), u(t)).
\end{equation*}
Inserting this into the encoding of the high-dimensional
system~\cref{eqn:sys} results in the following condition to hold for the
encoder and decoder in terms of the system dynamics and states:
\begin{equation} \label{eqn:autoinvar}
  E \circ f(x(t), u(t)) = E \circ f((D \circ E)(x(t)), u(t)),
\end{equation}
for all $x(t) \in \Mcalu(\xs, \us)$.
Since we consider local stabilization with the states $x(t) \in \Mcalu$
satisfying $\lVert x(t) - \xs \rVert \leq \epsilon$ for some $\epsilon > 0$,
and under the assumption that the function $f$ in~\cref{eqn:sys} is analytic in
both arguments, the linearization of $f$ about $(\xs, \us)$ given via its
Taylor series expansion will yield sufficient information about the dynamics:
\begin{equation*}
  f(x(t), u(t)) = f(\xs, \us) + \Jx(x(t) - \xs) + \Ju(u(t) - \us) +
    \Ocal\big( (x(t) - \xs)^{2} \big) + \Ocal\big( (u(t) - \us)^{2} \big),
\end{equation*}
where $\Jx(x) = (\nabla_{x}f(\xs, \us)) x$ is the Jacobian of
$f$ with respect to the state and
$\Ju(u) = (\nabla_{u}f(\xs, \us))u$ is the Jacobian of $f$ with
respect to the input.
Since $(\xs, \us)$ is an equilibrium point, we have that $\xs = f(\xs, \us)$.
Therefore, by introducing the new state and input variables
$\xc(t) = x(t) - \xs$ and $\uc(t) = u(t) - \us$ that describe the deviation
from the desired system behavior in~\cref{eqn:sys} yields
\begin{equation*}
  x(t + 1) - \xs
    = \xc(t + 1)
    = \Jx(\xc(t)) + \Ju(\uc(t)) + \Ocal\big( \xc(t)^{2} \big) +
      \Ocal\big( \uc(t)^{2} \big).
\end{equation*}
About $(\xs, \us)$, the original system~\cref{eqn:sys} is well approximated by
\begin{equation} \label{eqn:linear}
  \xc(t + 1) = \Jx(\xc(t)) + \Ju(\uc(t)).
\end{equation}
Note that $(0, 0)$ is the steady state of~\cref{eqn:linear} that corresponds
to $(\xs, \us)$ in~\cref{eqn:sys}.
Inserting~\cref{eqn:linear} into the condition~\cref{eqn:autoinvar} and making
the linear ansatz
\begin{equation} \label{eqn:unstabencode}
  E_{\Wcal}(x(t)) = W^{\trans} x(t) ~~\text{and}~~
  D_{\Wcal}(\xr(t)) = (W^{\trans})^{\dagger} \xr(t),
\end{equation}
with $W \in \R^{\nh \times \nr}$, for the encoder and decoder results in the
invariant subspace equation
\begin{equation} \label{eqn:eig}
  W^{\trans} \Jx = \left( W^{\trans} \Jx (W^{\trans})^{\dagger}
    \right) W^{\trans},
\end{equation}
where the eigenvalues of $(W^{\trans} \Jx (W^{\trans})^{\dagger})$ are all
the eigenvalues of $\Jx$ with absolute value larger or equal to one.
The basis matrix $W$ spans the subspace $\Wcal$ such that $\xs + \Wcal$
yields an exact description of $\Mcalu$ in the steady state of
interest $(\xs, \us)$ and is expected to be a suitable approximation for
the rest of the dynamics  $x(t) \in \Mcalu(\xs, \us)$ with
$\lVert x(t) - \xs \rVert \leq \epsilon$;
see~\cite{KraODetal05, WerP23a, ZieDG19}.
Also, note that the subspace $\Wcal$ has the same dimension $\nr$ as the
manifold $\Mcalu(\xs, \us)$, which only increases with the number of unstable
eigenvalues of the Jacobian $\Jx$ in the steady state.
In other words, the dimension only depends on the number of unstable system
modes, which is independent of the state dimension $\nh$ and the dimension
$\np$ of the controls; see \Cref{fig:example_dimensions}.


\subsection{Estimation of encoders and decoders from adjoint data}%
\label{sec:unstabest}

For the construction of the encoder $E_{\Wcal}$ in~\cref{eqn:unstabencode} from
data, we observe that~\cref{eqn:eig} defines the basis matrix $W$ via a linear
left eigenvalue equation in matrix form~\cite{GolV13}.
Therefore, the unstable manifold about the steady state is described by the
left eigenvectors corresponding to the unstable eigenvalues of $\Jx$.
The estimation of the left eigenvalues and eigenvectors of the Jacobian of $f$
is possible via adjoint data, which can be obtained in a non-intrusive
way~\cite{GhaW21, KraPW24, PehW16} via automatic differentiation of $f$ with
respect to $x(t)$.
In particular, the eigenvectors are estimated using data obtained by querying
the adjoint system on initial conditions and observing the state trajectories.
Many modern code packages allow for the use of automatic differentiation like
JAX~\cite{BraFHetal18},
FEniCS DOLPHIN~\cite{MitFD19},
TensorFlow~\cite{AbaBCetal16} and
PyTorch~\cite{PasGMetal19}.
To generate data from the adjoint system, we consider the idea of reverse
accumulation also known as adjoint mode automatic differentiation.
Reverse accumulation allows for vector-Jacobian products of the form
\begin{equation*}
  F[\xs, \us](z) = z^{\trans} \Jx = z^{\trans} \nabla_{x} f(\xs, \us),
\end{equation*}
for $z \in \R^{N}$, which  is equivalent to querying the adjoint system given
by the adjoint operator
\begin{equation} \label{eqn:jacop}
  G[\xs, \us](z) = \left( F[\xs, \us](z) \right)^{\trans} = \Jx^{\trans} z
    = \left( \nabla_{x} f(\xs, \us) \right)^{\trans} z.
\end{equation}
The left eigenspaces of $\Jx$ given by the operator $F$ are the
right eigenspaces of $\Jx^{\trans}$ given by the operator $G$.
Notice that the eigenvectors are computed using only queries of
the operator on vectors, which are given by state observations.
Thereby, we can compute the left eigenspaces in~\cref{eqn:eig} by
evaluating the adjoint system~\cref{eqn:jacop} using vector-Jacobian products
in Krylov methods~\cite{Arn51, Ste01}.


\section{Leveraging the latent manifold of unstable dynamics for
reinforcement learning}%
\label{sec:mllearn}

In this section, we introduce unstable manifold policy optimization (UMPO) to
optimize for a policy that acts on the latent states of the manifold of
unstable dynamics.
We further introduce multi-fidelity policy optimization (MF-UMPO) that combines
optimizing for a policy with a latent model over the unstable manifold and
subsequently correcting the policy with the high-dimensional state model.


\subsection{Policy optimization on unstable manifolds}%
\label{sec:rlstab}

We now leverage the approximation of the unstable
latent manifold $\xs + \Wcal$ to train a policy $\Ktr$ with a
low-dimensional parametrization for stabilizing the high-di\-men\-sio\-nal
system~\cref{eqn:sys}.
We denote this method as \Proj{}.
Thereby, we first learn $\Krr: \R^{\nr} \to \R^{\np}$ that is
parametrized over $\Thetar \subseteq \R^{\nmr}$ and that acts on latent
states of dimension $\nr$.
Then, we lift $\Krr$ via the encoder $E_{\Wcal}$ of the unstable latent
manifold $\Mcalu$ to act on states $x(t)$ of the high-dimensional
system~\cref{eqn:sys},
\begin{equation} \label{eqn:projneural}
  \Ktr(x(t)) = \Krr( E_{\Wcal}(x(t) - \xs)) + \us,
\end{equation}
where a shift by the equilibrium point $(\xs, \us)$ is added to center the
unstable manifold $\Mcalu$ at the steady state $\xs$ and the control signals at
the steady-state control $\us$.
The parameter vector $\thetar$ of $\Ktr$ can then be trained with an
objective such as~\cref{eqn:Prelim:RLObj} by querying the high-dimensional
system~\cref{eqn:sys} using, e.g., policy gradient methods
(DDPG~\cite{LilHPetal15}, TD3~\cite{FujVM18})
that are suited for continuous observation and action spaces.

In contrast to direct approaches that learn $\Kt$ by acting on the
$\nh$-dimensional states $x(t)$ of the high-dimensional system~\cref{eqn:sys},
the policy $\Ktr$ given in~\cref{eqn:projneural} includes an encoding step
with $E_{\Wcal}$ that maps the shifted $x(t)$ onto a low-dimensional latent
state of dimension~$\nr$.
Thus, the parametrization $\Thetar \subseteq \R^{\nmr}$ of $\Ktr$ depends on
the dimension of the unstable latent manifold $\nr$, in contrast to
$\Theta \subseteq \R^{\nm}$ of $\Kt$ that depends on the dimension $\nh$
of the high-dimensional model~\cref{eqn:sys}.
Due to the change in the dimensions of the policy mapping, a lower
dimensional parametrization $\nmr \ll \nm$ is sufficient for $\Ktr$ than for
$\Kt$, which reduces the optimization space and training costs, as we will
demonstrate with numerical experiments below.
The \Proj{} method is summarized in \Cref{alg:MRLStab} for the case of
\emph{no multi-fidelity} using
\Cref{alg:MRLStab_eig,alg:MRLStab_pre2,alg:MRLStab_train}.


\subsection{Multi-fidelity policy optimization on the unstable manifold}%
\label{sec:mrlstab}

We now propose a multi-fidelity version of \Proj{}, where an policy is
pre-trained on a cheap-to-evaluate surrogate latent model of the system
dynamics of interest so that the pre-trained policy is a good starting point for
easing the training on the high-dimensional system~\cref{eqn:sys}.
Additionally, because the agent is trained on the high-dimensional model
eventually, confidence in the learned policy can be higher than when
learning with a surrogate model alone.


\paragraph{Latent model of unstable dynamics for pre-training.}%
The latent model of unstable dynamics is given by
\begin{equation} \label{eqn:rom}
  \xr(t + 1) = \Jxr(\xr(t)) + \Jur(\ur(t)),
\end{equation}
with the state transition and control terms
\begin{equation} \label{eqn:jaclin}
  \Jxr(\xr(t)) = E_{\Wcal}(\nabla_{\operatorname{x}} f(\xs, \us)
    D_{\Wcal}(\xr(t)))
  \quad \text{and} \quad
  \Jur(\ur(t)) =  E_{\Wcal}(\nabla_{\operatorname{u}} f(\xs, \us) \ur(t)),
\end{equation}
where the encoder $E_{\Wcal}$ and decoder $D_{\Wcal}$ are defined
in~\cref{eqn:unstabencode}.
This is a specific instance of the generic latent model given
in~\cref{eqn:ManifoldModel} that results from applying the chosen encoder and
decoder~\cref{eqn:autoinvar} to the linearized dynamics in~\cref{eqn:linear}.
As $\Jxr$ and $\Jur$ in~\cref{eqn:rom} are linear, the describing matrices can
be assembled using the reverse mode automatic differentiation as in the
estimation of $\Wcal$ in \Cref{sec:unstabest}.
The terms~\cref{eqn:jaclin} can then be computed by
\begin{equation} \label{eqn:reverseid}
  \Jxr = E_{\Wcal}(\nabla_{\operatorname{x}} f(\xs, \us)) (W^{\trans})^{\dagger}
  \quad \text{and} \quad
  \Jur = E_{\Wcal}(\nabla_{\operatorname{u}} f(\xs, \us)),
\end{equation}
where $E_{\Wcal}(.)$ above is applied column-wise to the argument and
$(W^{\trans})^{\dagger}$ comes from the decoder $D_{\Wcal}$
in~\cref{eqn:unstabencode}.
Alternatively, the terms in~\cref{eqn:rom} can be learned via $\nr + \np$
forward evaluations of~\cref{eqn:sys} and system identification in the subspace
$\Wcal$; see~\cite[Prop.~1]{WerP24}.
This can be advantageous compared to~\cref{eqn:reverseid} because it allows to
separate the computational steps for estimating $\Wcal$
and~\cref{eqn:rom}, which is important in modularized implementations.
In any case, the terms in~\cref{eqn:jaclin} for~\cref{eqn:rom} can be
precomputed and stored as matrices, which means that querying the latent
model of unstable dynamics~\cref{eqn:rom} is cheap, and thus well suited
for pre-training in a first step of multi-fidelity reinforcement learning.


\paragraph{Multi-fidelity fine-tuning on the unstable manifold.}%
We now propose multi-fidelity policy optimization with the unstable latent
manifold (\ROMProj{}), which proceeds in two steps.
In Step~1, the policy $\Krr$ with parametrization $\thetar \in \Thetar$ as
defined in~\cref{eqn:projneural} is pre-trained to stabilize the latent model
of unstable dynamics~\cref{eqn:rom} with respect to the equilibrium point
$(0, 0)$.
The result is a parameter $\thetar_{\ast}^{\pre}$ that is pre-trained
using the latent model only.
In Step~2, the pre-trained parameter $\thetar_{\ast}^{\pre}$ is used as
starting point for training the policy $\Krr$ defined
in~\cref{eqn:projneural} on the high-dimensional system~\cref{eqn:sys} to
obtain $\thetar_{\ast}$ with the procedure described in the previous paragraph.
The corresponding policy $K_{\thetar_{\ast}}$ is then used for stabilizing
the high-dimensional model~\cref{eqn:sys}.

Key is that the pre-trained $\thetar_{\ast}^{\pre}$ is a good starting point
for Step~2, so that fine-tuning in a multi-fidelity
sense~\cite{PehWG18, WerOP23} is cheap in terms of, e.g., high-dimensional
model queries.
In particular, the pre-trained parameter vector $\thetar_{\ast}^{\pre}$ has
to be an admissible point in the sense that it locally stabilizes the
high-dimensional system~\cref{eqn:sys} when used in~\cref{eqn:projneural}.
The \ROMProj{} method is summarized in \Cref{alg:MRLStab}.

The following theorem shows that the parameter $\thetar_{\ast}^{\pre}$ obtained
via the pre-training yields an admissible, stabilizing policy for the
high-dimensional system~\cref{eqn:sys}.

\begin{theorem} \label{thm:inittrain}
  Let $f$ in~\cref{eqn:sys} be analytic in $(\xs, \us)$ and let the parameter
  $\thetar_{\ast}^{\pre}$ be such that the policy
  $\Kr_{\thetar_{\ast}^{\pre}}$ is stabilizing for~\cref{eqn:rom} with respect
  to the zero steady state $(0, 0)$.
  Then, there exists an epsilon $\epsilon > 0$ such that
  $\Kr_{\thetar_{\ast}^{\pre}}$ in the control law~\cref{eqn:projneural} is
  locally stabilizing for~\cref{eqn:sys} for all inital values $x(0) \in
  \Xcal_{0}$ such that $\lVert x(0) - \xs \rVert \leq \epsilon$.
\end{theorem}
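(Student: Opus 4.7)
The plan is to apply Lyapunov's indirect method (stability via linearization) to the closed-loop system
\begin{equation*}
  x(t+1) = f\bigl(x(t),\, \Kr_{\thetar_{\ast}^{\pre}}(E_{\Wcal}(x(t) - \xs)) + \us\bigr).
\end{equation*}
Shifting coordinates by $\xc(t) = x(t) - \xs$, $\uc(t) = u(t) - \us$, and noting that $\xr = 0$ must be an equilibrium of the closed-loop reduced model~\cref{eqn:rom} so $\Kr_{\thetar_{\ast}^{\pre}}(0) = 0$, I would Taylor-expand the closed loop about the origin using that $f$ is analytic at $(\xs, \us)$. Writing $L = \nabla_{\xr}\Kr_{\thetar_{\ast}^{\pre}}(0) \in \R^{\np\times\nr}$, this yields
\begin{equation*}
  \xc(t+1) = \bigl(\Jx + \Ju L W^{\trans}\bigr)\xc(t) + \Ocal\bigl(\lVert\xc(t)\rVert^{2}\bigr).
\end{equation*}

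The next step is to show that the closed-loop matrix $A_{\mathrm{cl}} = \Jx + \Ju L W^{\trans}$ has spectral radius strictly less than one. I would exploit the invariant-subspace property~\cref{eqn:eig}: decompose $\Jx = T\,\mathrm{diag}(\Lambda_{\mathrm{u}}, \Lambda_{\mathrm{s}})\,T^{-1}$ with $T = [T_{\mathrm{u}}, T_{\mathrm{s}}]$, where $\Lambda_{\mathrm{u}}$ collects the eigenvalues with $|\lambda|\geq 1$ and $\Lambda_{\mathrm{s}}$ those with $|\lambda|<1$ (using a real Jordan form if $\Jx$ is not diagonalizable). Because $W$ spans the left eigenspace associated with the unstable eigenvalues, $W^{\trans}T_{\mathrm{s}} = 0$ and $M := W^{\trans}T_{\mathrm{u}}$ is invertible. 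In the eigen-coordinates $\xi = T^{-1}\xc$, the closed-loop matrix becomes block-triangular,
\begin{equation*}
  T^{-1}A_{\mathrm{cl}}T = \begin{bmatrix} \Lambda_{\mathrm{u}} + B_{\mathrm{u}} L M & 0 \\ B_{\mathrm{s}} L M & \Lambda_{\mathrm{s}} \end{bmatrix},
  \qquad T^{-1}\Ju = \begin{bmatrix} B_{\mathrm{u}} \\ B_{\mathrm{s}} \end{bmatrix},
\end{equation*}
so $\sigma(A_{\mathrm{cl}}) = \sigma(\Lambda_{\mathrm{u}} + B_{\mathrm{u}}LM) \cup \sigma(\Lambda_{\mathrm{s}})$. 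The stable block $\Lambda_{\mathrm{s}}$ contributes only eigenvalues inside the open unit disk by construction.

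It remains to identify the upper-left block with the reduced closed-loop generator. A short computation from $W^{\trans} = [M,0]T^{-1}$ and the definitions~\cref{eqn:reverseid} yields $\Jxr = M\Lambda_{\mathrm{u}}M^{-1}$ and $\Jur = M B_{\mathrm{u}}$, whence
\begin{equation*}
  \Jxr + \Jur L = M\bigl(\Lambda_{\mathrm{u}} + B_{\mathrm{u}} L M\bigr)M^{-1},
\end{equation*}
so the two blocks share their spectrum. By the hypothesis that $\Kr_{\thetar_{\ast}^{\pre}}$ stabilizes~\cref{eqn:rom} at the origin, $\sigma(\Jxr + \Jur L) \subset \{|\lambda|<1\}$, and therefore $\rho(A_{\mathrm{cl}})<1$. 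Lyapunov's indirect method for discrete-time systems then delivers the desired $\epsilon>0$ such that every $x(0)\in\Xcal_{0}$ with $\lVert x(0)-\xs\rVert\leq\epsilon$ yields $x(t)\to\xs$.

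The main obstacle is the eigen-structural bookkeeping in Step~3: one must verify that the invariant-subspace identity~\cref{eqn:eig} really forces $W^{\trans}T_{\mathrm{s}}=0$ (so that $A_{\mathrm{cl}}$ is block-triangular rather than full) and that the construction survives the non-diagonalizable case, where one would carry out the same argument using generalized left eigenspaces and a Jordan form of $\Jx$. Once that block-triangular decomposition is in hand, the reduction to the reduced closed-loop spectrum and the appeal to the indirect method are routine.
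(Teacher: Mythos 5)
Your route is essentially the paper's, made self-contained: the paper also linearizes $f$ about $(\xs,\us)$ and then splits the argument into (i) ``stabilizing the latent model~\cref{eqn:rom} implies stabilizing the linearization~\cref{eqn:linear}'' and (ii) ``stabilizing the linearization implies local stabilization of~\cref{eqn:sys}'', but it obtains (i) and (ii) by invoking Lemma~1 and Proposition~1 of~\cite{WerP23a} rather than proving them. Your block-triangularization is a correct inline proof of (i): $W^{\trans}T_{\mathrm{s}}=0$ does follow from~\cref{eqn:eig} (also in the non-diagonalizable case, working with the generalized left eigenspace or a block Schur form), and the identities $\Jxr = M\Lambda_{\mathrm{u}}M^{-1}$, $\Jur = MB_{\mathrm{u}}$ and the similarity $\Jxr+\Jur L = M(\Lambda_{\mathrm{u}}+B_{\mathrm{u}}LM)M^{-1}$ check out.

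The step that does not follow from the stated hypothesis is $\sigma(\Jxr+\Jur L)\subset\{\lvert\lambda\rvert<1\}$. The dynamics of~\cref{eqn:rom} are linear, but $\Kr_{\thetar_{\ast}^{\pre}}$ is a nonlinear (neural-network) policy, so ``stabilizing for~\cref{eqn:rom}'' means local asymptotic stability of a nonlinear closed loop, and that does not force the linearized loop to be strictly Schur: eigenvalues on the unit circle can persist and be tamed by the nonlinearity of the policy (e.g.\ $\xr(t+1)=\xr(t)+\ur(t)$ with $\Kr(\xr)=-\xr^{3}$), and this borderline is exactly what the unstable manifold retains, since~\cref{eqn:eig} keeps all modes with $\lvert\lambda\rvert\geq 1$. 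In that marginal case your appeal to Lyapunov's indirect method collapses, and the conclusion itself can fail once the quadratic terms of $f$ discarded in~\cref{eqn:linear} re-enter; so you need to read ``stabilizing'' as exponentially stabilizing (equivalently, the linearized reduced loop is Schur), which is the notion the cited results in~\cite{WerP23a} carry in the paper's proof, where the feedback is a linear gain and asymptotic and exponential stability coincide. Two smaller points: $L=\nabla_{\xr}\Kr_{\thetar_{\ast}^{\pre}}(0)$ requires differentiability of the policy at the origin (not automatic for the ReLU parametrizations used in the experiments), and $\Kr_{\thetar_{\ast}^{\pre}}(0)=0$ should be argued from convergence of the control to the zero steady-state input, since the state equation alone only gives $\Jur\Kr_{\thetar_{\ast}^{\pre}}(0)=0$. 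With ``stabilizing'' strengthened in this way, your argument is complete and amounts to a self-contained proof of the two results the paper cites.
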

\begin{proof}
  From the assumptions, it follows that the Taylor series expansion of $f$
  about $(\xs, \us)$ yields a suitable approximation of the dynamics
  of~\cref{eqn:sys} via the linear system~\cref{eqn:linear}.
  From the condition that $\thetar_{\ast}^{\pre}$ is such that the
  policy $\Kr_{\thetar_{\ast}^{\pre}}$ is stabilizing
  for~\cref{eqn:rom} and the fact that the encoder and decoder
  in~\cref{eqn:unstabencode} are constructed via the basis matrix $W$
  of the left eigenspace corresponding to the unstable eigenvalues of $\Jx$,
  it follows from~\cite[Lem.~1]{WerP23a} that the policy
  $K_{\thetar_{\ast}^{\pre}}$ with~\cref{eqn:projneural} is stabilizing
  for~\cref{eqn:linear} towards the $(0, 0)$ steady state.
  Then it follows from~\cite[Prop.~1]{WerP23a} that there exists an
  $\epsilon > 0$ such that $K_{\thetar_{\ast}^{\pre}}$ is locally stabilizing
  for the equilibrium $(\xs, \us)$ in~\cref{eqn:sys}, which proofs the result
  of the theorem.
\end{proof}

Following \Cref{thm:inittrain}, we see that the initialization of
the neural network in~\cref{eqn:projneural} with $\thetar_{\ast}^{\pre}$
for the training on~\cref{eqn:sys} yields an admissible policy that
just needs to be adjusted to the high-dimensional system~\cref{eqn:sys}.


\subsection{Algorithmic description}

\begin{algorithm}[t] \small
  \SetAlgoHangIndent{1pt}
  \DontPrintSemicolon
  \caption{\resizebox{.81\linewidth}{!}{%
    [Multi-fidelity] Unstable manifold policy optimization (\algname{}).}}
  \label{alg:MRLStab}

  \KwIn{Steady state $(\xs, \us)$, queryable model $f$.}
  \KwOut{Parameters $\thetar_{\ast}$ for policy $\Ktr$
    in~\cref{eqn:projneural}.}

  Estimate the the left unstable eigenspace $\Wcal$ in~\cref{eqn:eig} using
    vector-Jacobian products with $f$ in~$(\xs, \us)$ and a Krylov
    eigenvalue solver.\;
  \label{alg:MRLStab_eig}

  \eIf{multi-fidelity}{
    Learn the low-dimensional model $(\Jxr, \Jur)$ of unstable
      dynamics~\cref{eqn:rom} using:\\
      \nonl\quad{}(a) automatic differentiation of $f$, or\\
      \nonl\quad{}(b) system identification with $\nr + \np$ forward
        evaluations of $f$.\;
    \label{alg:MRLStab_rom}

    Obtain a parameter $\thetar_{\ast}^{\pre}$ by training $\Krr$ to be a
      stabilizing policy for~\cref{eqn:rom} with respect to the
      $(0, 0)$ steady state.\;
    \label{alg:MRLStab_pre}
  }{
    Set the parameter initialization $\thetar_{\ast}^{\pre}$ to be random.\;
    \label{alg:MRLStab_pre2}
  }

  Obtain the parameter $\thetar_{\ast}$ by training $\Ktr$
    in~\cref{eqn:projneural} to be a stabilizing policy for~\cref{eqn:sys}
    with the initialization $\thetar_{\ast}^{\pre}$.\;
  \label{alg:MRLStab_train}
\end{algorithm}

\Cref{alg:MRLStab} summarizes the steps for learning stabilizing
policies on the unstable manifold with an optional pre-training step on the
latent model of unstable dynamics.
As the first step of \Cref{alg:MRLStab} in \Cref{alg:MRLStab_eig},
the basis matrix $W$ for the encoder and decoder is learned via vector-Jacobian
products with the operator $f$ as described in \Cref{sec:rlstab}.

Afterwards, if the optional pre-training is desired (determined via the
flag \emph{multi-fidelity} in \Cref{alg:MRLStab}), the latent model of
unstable dynamics~\cref{eqn:rom} is learned using the basis matrix $W$ from
the previous step and either automatic differentiation of $f$ or classical
system identification via forward evaluations of $f$.
Once the matrices in~\cref{eqn:rom} are known, this latent model is used in a
policy optimization loop to obtain a policy
$\Kr_{\thetar_{\ast}^{\pre}}$ that stabilizes~\cref{eqn:rom}
towards the $(0, 0)$ steady state.
Using~\cref{eqn:rom} for the policy optimization yields the two main
advantages compared to using directly~\cref{eqn:sys}, namely that
(i) the latent model is cheaper to evaluate since $\nr \ll \nh$, and
(ii) the dynamics of~\cref{eqn:sys} are easier to stabilize since they are
purely unstable and the training does not destabilize any additional dynamics
that have been stable before.
Also note that $\Kr_{\thetar_{\ast}^{\pre}}$ follows the theory in
\Cref{thm:inittrain} such that it is an admissible, stabilizing
policy for the high-dimensional problem already.
In the case that no pre-training is requested, the initial parametrization
is set to be random.

Lastly, in \Cref{alg:MRLStab_train} of \Cref{alg:MRLStab}, the
initialization $\thetar_{\ast}^{\pre}$ is fine tuned via policy optimization
on the high-di\-men\-sion\-al system.
In the case without pre-training, this step learns a stabilizing policy
from scratch while in the multi-fidelity version, an admissible policy
is verified on the high-dimensional system and further improved in terms
of its final performance.


\section{Numerical experiments}%
\label{sec:numerics}

We demonstrate \Proj{} and \ROMProj{} with three numerical experiments.
The reported experiments were run on compute nodes of the
\texttt{Greene} high-performance computing cluster of the New York University
equipped with 16 processing cores of the Intel Xeon Platinum 8268 24C 205W CPU
at 2.90\,GHz and 16, 32 or 48\,GB main memory.
We implemented the experiments in Python 3.9.12 running on Red Hat
Enterprise Linux release 8.4 (Ootpa).
For the reinforcement learning agents, we use the DDPG implementation from
JAXRL version~0.0.7~\cite{Kos22}.
The source codes, data and computed results of the experiments are available
at~\cite{supWer24}.


\subsection{Setup of numerical experiments}%
\label{sec:compenv}


\subsubsection{Training rewards}%
\label{sec:rewards}

A classical cost function for stabilization via optimal control is given as the
sum of the quadratic deviations of the state and the input from the controlled
steady state of interest~$(\xs, \us)$:
\begin{equation*}
  J(u(t)) = \sum\limits_{t = 0}^{\tf}
    \lVert x(t) - \xs \rVert_{Q}^{2} +
    \lVert u(t) - \us \rVert_{R}^{2},
\end{equation*}
where the weighted norms are given by
\begin{equation*}
  \lVert x(t) - \xs \rVert_{Q}^{2} = (x(t) - \xs )^{\trans} Q ( x(t) - \xs )
  \quad\text{and}\quad
  \lVert u(t) - \us \rVert_{R}^{2} = (u(t) - \us)^{\trans} R (u(t) - \us),
\end{equation*}
for some symmetric positive semi-definite matrices
$Q \in \R^{\nh \times \nh}$ and $R \in \R^{\np \times \np}$; see, for
example,~\cite{Loc01}.
This inspired the use of quadratic reward functions in control-oriented
reinforcement learning
tasks~\cite{LiTZetal22, MalPBetal19, MohZSetal22, Goe19, GraES21}.
In our setup, we employ the reward function
\begin{equation} \label{eqn:reward}
  r(x(t), u(t)) = -\sqrt{\lVert x(t) - \xs \rVert_{Q}^{2} +
    \lVert u(t) - \us \rVert_{R}^{2}},
\end{equation}
with the weighting terms to be $Q = I_{N}$ and $R = \lambda_{\mathrm{u}} I_{p}$,
and the regularization constant $\lambda_{\mathrm{u}} \geq 0$.
Additionally, early termination of episodes at a time $\ta < \tf$ due to
instabilities, where $\tf$ denotes the episode length, is penalized by
approximations of the remaining accumulated rewards as
\begin{equation*}
  r(x(\ta), u(\ta)) = -\sqrt{ (\tf - \ta)
    \lVert x(t) - \xs \rVert_{Q}^{2} +
    \lVert u(t) - \us \rVert_{R}^{2}},
\end{equation*}
to encourage the optimization towards policies with full episode length.
In the comparison of the different approaches, we consider a normalized variant
of the accumulated rewards~\cref{eqn:Prelim:RLObj} per episode given by
\begin{equation} \label{eqn:sumrew}
  \Rn = \frac{\sum\limits_{j = 0}^{\tf} r(x(t), u(t))}%
    {\sqrt{(\nc + \lambda_{\mathrm{u}}) \tf}},
\end{equation}
with episode length $\tf$, dimension of the observable $\nc$ and
the regularization constant $\lambda_{\mathrm{u}}$ from~\cref{eqn:reward}.
Note that the state-space dimension $\nc$ in~\cref{eqn:sumrew} is either
$\nh$ for the direct approach or $\nr$ in the case of the new
approaches on the unstable manifold.
We observed that the accumulated rewards range between several orders of
magnitude, therefore we use in the comparison the logarithmic mean
\begin{equation*}
  \logmean(\Rn) = -10^{\displaystyle \frac{1}{q} \sum\limits_{j = 1}^{q}
    \log_{10} \lvert \Rn^{j} \rvert},
\end{equation*}
with $q \in \N$, the number of considered elements.


\subsubsection{Governing equations of physical models}%
\label{app:pdes}

The examples considered in the following include a
nonlinear reaction-diffusion problem modeled by the Allen-Cahn
equation~\cite{AllC75, ChaI07},
a chemical reaction inside a tubular reactor~\cite{HeiP81, KraW19} and
the behavior of crystal clusters modeled via the Toda
lattice~\cite{Tod67, Wer21}.
The corresponding equations describing the physical problems and details on
the space-time discretizations can be found below.
The governing equations are numerically solved with implementations using
JAX~\cite{BraFHetal18} to enable automatic differentiation with respect to
states and controls.
The implementations of the models can be founds in the accompanying code
package~\cite{supWer24}.


\paragraph{Allan-Cahn equation.}%
This example is a nonlinear re\-ac\-tion-diffusion process described by the
one-dimensional Allen-Cahn equation~\cite{AllC75}.
The particular instance of the equation used here is sometimes also referred to
as Chafee-Infante equation~\cite{ChaI07}.
The equation with homogeneous Dirichlet boundary conditions is given by
\begin{subequations} \label{eqn:allencahn}
\begin{align}
  \partial_{\mathrm{t}} \nu(t, \zeta) - \kappa \Delta \nu(t, \zeta) +
    \alpha_{1}  \nu(t, \zeta)^{3} - \alpha_{2} \nu(t, \zeta) + u(t) & = 0,
    && \text{for}~\zeta \in \Omega,\\
  \nu(t, \zeta) & = 0, && \text{for}~\zeta \in \partial \Omega,
\end{align}
\end{subequations}
with the diffusion parameter $\kappa \in \R$, the two reaction parameters
$\alpha_{1}, \alpha_{2} \in \R$ and the distributed controls~$u(t)$.
For our experiments, the diffusion parameter has been chosen as
$\kappa = 0.2$ and the reaction parameters as
$\alpha_{1} = 2.5$ and $\alpha_{2} = 0$.
The spatial coordinates of~\cref{eqn:allencahn} in $\Omega = (0, 1)$ are
discretized using a finite difference scheme such that $\nh = 1\,000$ and
$\np = 1$.
The system is discretized in time via the implicit-explicit (IMEX) Euler scheme,
with sampling time $\tau = 0.01$, where the linear state evolution is
considered implicitly and the nonlinear term and the controls explicitly.
Note that the terms of the discrete-time system are not computed explicitly but
implemented via function evaluations.
An output operator $C$ is chosen to generate output channels via
$y(t) = C x(t)$ showing as first entry the accumulated state variables for
the first third of the spatial domain and the accumulated rest in the second
channel.
The steady state of interest is computed using the Newton iteration for the
constant steady state control $\us = 1$.
The system has one ($\nr = 1$) unstable mode in $(\xs, \us)$.


\paragraph{Tubular reactor model.}%
The tubular reactor model describes a combustion-based reaction inside a tube
that transforms reactants into desired products~\cite{HeiP81, KraW19}.
The process is described by the one-dimensional coupled partial differential
equations
\begin{subequations} \label{eqn:tubularreactor}
\begin{align}
  \partial_{\mathrm{t}} \psi(t, \zeta)
    & = \frac{1}{\mathrm{Pe}} \Delta \psi(t, \zeta)
    - \partial_{\zeta} \psi(t, \zeta)
    - \mathcal{D} \psi(t, \zeta)
    \operatorname{e}^{\gamma - \frac{\gamma}{\nu(t, \zeta)}},
    && \text{for}~\zeta \in \Omega, \\ \nonumber
  \partial_{\mathrm{t}} \nu(t, \zeta)
    & = \frac{1}{\mathrm{Pe}} \Delta \nu(t, \zeta)
    - \partial_{\zeta} \nu(t, \zeta)
    - \beta(\nu(t, \zeta) - \nu_{\mathrm{ref}} u_{2}(t)) \\
  & \quad{}+{}
    \mathcal{B} \mathcal{D} \psi(t, \zeta)
    \operatorname{e}^{\gamma - \frac{\gamma}{\nu(t, \zeta)}},
    && \text{for}~\zeta \in \Omega, \\
  0 & = \partial_{\zeta} \psi(t, \zeta)
    - \mathrm{Pe} (\psi(t, \zeta) - u_{1}(t)),
    && \text{for}~\zeta \in \partial \Omega_{1}, \\
  0 & = \partial_{\zeta} \nu(t, \zeta)
    - \mathrm{Pe} (\nu(t, \zeta) - u_{2}(t)),
    && \text{for}~\zeta \in \partial \Omega_{1}, \\
  0 & = \partial_{\zeta} \psi(t, \zeta),
    && \text{for}~\zeta \in \partial \Omega_{2}, \\
  0 & = \partial_{\zeta} \nu(t, \zeta),
    && \text{for}~\zeta \in \partial \Omega_{2},
\end{align}
\end{subequations}
where $\psi(t, \zeta)$ is the concentration of the reactant and $\nu(t, \zeta)$
the temperature of the tube.
The spatial domain is chosen as $\Omega = (0, 1)$, with
$\partial \Omega_{1} = 0$ and $\partial \Omega_{2} = 1$.
The parameters in~\cref{eqn:tubularreactor} are chosen as in~\cite{KraW19} with
$\mathrm{Pe} = 5$,
$\mathcal{D} = 0.167$,
$\gamma = 25$,
$\beta = 2.5$,
$\nu_{\mathrm{ref}} = 1$ and
$\mathcal{B} = 0.5$.
The controls in this example allow the change of the reactant concentration at
the right end of the tube as well as steering the temperature of the complete
tube.
The system is discretized in space via finite differences and in time using the
IMEX Euler scheme, with the sampling time $\tau = 0.01$,
where the linear state contributions are considered implicitly and the
nonlinear part and inputs explicitly.
If not stated otherwise, the state dimension of this example is $\nh = 998$ with
$\np = 2$ control inputs.
For the computations in \Cref{fig:training_times_full}, finer
discretizations have been used, too.
Note that the terms of the discrete-time system are not computed explicitly but
implemented via function evaluations.
An output operator $C$ is chosen to generate two measurements of the form
$y(t) = C x(t)$ giving the concentration and temperature at the left end of
the tube.
The steady state of interest is computed using the Newton iteration for the
steady state control $\us = 1$.
For any discretization size discussed here, the system has two ($\nr = 2$)
unstable modes in $(\xs, \us)$ leading under disturbances to limit cycle
oscillations of the state.


\paragraph{Toda lattice model.}%
The Toda lattice model describes the vibrational behavior of a one-dimensional
crystal structure~\cite{Tod67, Wer21}.
We consider here the process of crystallization, in which particles have formed
clusters that repel each other if disturbances occur.
The ordinary differential equations describing the process in parametrized form
were derived in~\cite{Wer21}.
A crystal with $\ell$ particles is given by
\begin{subequations}  \label{eqn:todalattice}
\begin{align}
  m_{1} \ddot{q}_{1}(t) + \gamma_{1} \dot{q}_{1}(t) +
    \operatorname{e}^{k_{1}(q_{1}(t) - q_{2}(t))} - 1 & = u_{1}(t), \\
  m_{j} \ddot{q}_{j}(t) + \gamma_{j} \dot{q}_{j}(t) +
    \operatorname{e}^{k_{j}(q_{j}(t) - q_{j + 1}(t))} -
    \operatorname{e}^{k_{j - 1}(q_{j - 1}(t) - q_{j}(t))} & = u_{j}(t), \\
  m_{\ell} \ddot{q}_{\ell}(t) + \gamma_{\ell} \dot{q}_{\ell}(t) +
    \operatorname{e}^{k_{\ell} q_{\ell}(t)} -
    \operatorname{e}^{k_{\ell - 1}(q_{\ell - 1}(t) - q_{\ell}(t))}
    & = u_{\ell}(t),
\end{align}
\end{subequations}
for $j = 2, \ldots, \ell - 1$.
Therein are $m_{i} > 0$, $\gamma_{i} > 0$ and $k_{i} \in \R$ the mass, damping
and particle forces of the $i$-th particle.
The states in~\cref{eqn:todalattice} are $q_{i}(t)$ as the displacement of
the $i$-th particle such that $\dot{q}_{i}(t)$ is the corresponding momentum.
The system~\cref{eqn:todalattice} is described by second-order differential
equations.
For the design of policies, the system is considered in first-order form
using its phase state
\begin{equation*}
  x(t) = \begin{bmatrix} q(t) \\ \dot{q}(t) \end{bmatrix} \in \R^{2 \ell},
\end{equation*}
where $q(t)$ and $\dot{q}(t)$ are the concatenated displacement and momentum
vectors of all particles.
In our experiments, we have chosen $\ell = 500$ particles such that the
considered first-order system has the order $\nh = 1\,000$.
The parameters in~\cref{eqn:todalattice} are chosen to model three clusters of
sizes $(150, 250, 100)$ with different particles of the form:
\begingroup
\allowdisplaybreaks
\begin{align*}
  m & = \mathds{1}_{100}^{\trans} \otimes \begin{bmatrix} 2 & 1 & 3 & 5 & 4
    \end{bmatrix}, \\
  \gamma_{j} & =
    \begin{cases}
      0.1 & \text{for}~j < 150, \\
      0.1 & \text{for}~j = 150, \\
      0.15 & \text{for}~150 < j < 400,\\
      0.1 & \text{for}~j = 400,\\
      0.5 & \text{for}~400 < j \leq 500,
    \end{cases}\\
  k_{j} & =
    \begin{cases}
      2 & \text{for}~j < 150, \\
      -1 & \text{for}~j = 150, \\
      5 & \text{for}~150 < j < 400,\\
      -2 & \text{for}~j = 400,\\
      1 & \text{for}~400 < j \leq 500.
    \end{cases}
\end{align*}
\endgroup
The negative forces for the particles at the end of the clusters result in
repulsion between the clusters.
The controls allow to influence the displacement of all particles inside
a cluster such that $\np = 3$.
The system is discretized in time with the same IMEX Euler scheme that is used
for the other examples such that the linear state evolution is handled
implicitly and the nonlinearity and inputs explicitly, with sampling time
$\tau = 0.1$.
Note that the terms of the discrete-time system are not computed explicitly but
implemented in function evaluations.
An output operator $C$ is chosen to generate three measurements via
$y(t) = C x(t)$ giving the mean velocity of all particles in a cluster.
The zero steady state $(0, 0)$ is considered here for stabilization.
The resulting system has two ($\nr = 2$) unstable modes in $(\xs, \us)$, which
model the system behavior that particle clusters drift indefinitely apart under
small disturbances.


\subsubsection{Hyperparameters and learning architectures}

We test three variants of the introduced approach:
\Proj{} and \ROMProj{} as described in \Cref{alg:MRLStab}, and the
policy given from the initial parametrization for \ROMProj{}, which
is learned only on the manifold approximation~\cref{eqn:rom}, further denoted
by \ROM{}.
These are compared to directly learning a stabilizing policy on the
high-dimensional system states denoted by \FOM{}.
Further details concerning the setup as well as computational
parameters and used neural network architectures can be found below.


\paragraph{Neural network architectures.}%
We implemented DDPG reinforcement learning agents using neural networks for the
actors, which approximate the desired policies, and critics, which are needed in
the optimization procedure and correspond to the value function;
see, for example,~\cite{SutB18}.
All actors in the experiments are parametrized by neural networks with two
hidden layers with \texttt{ReLU} activation
functions.
The final layer of the networks however uses either \texttt{tanh},
\texttt{identity} or \texttt{ELU}, with parameter $\alpha = 1$, as activation
function.
Note that the use of \texttt{tanh} for control-oriented neural networks
is a classical choice~\cite{LilHPetal15}, which is one motivation for the use
of policy parametrizations~\cref{eqn:projneural} that are centered around
the steady state control $\us$.
For the critics, the standard architecture from JAXRL~\cite{Kos22} has been
used.
We also experimented with networks with different widths, namely networks
with either $(20, 10)$ neurons or $(400, 300)$ neurons in the hidden layers.


\begin{table*}[t]
  \centering
  \caption{Sets of hyperparameters used in the experiments.
    The first three parameters are not varied and fixed for all experiments
    and all examples.
    For the rest, experiments have been performed for each possible parameter
    combination.
    For the regularization parameter $\lambda_{\mathrm{u}}$, we implemented two
    sets where the first one is used for Allen-Cahn and the tubular reactor
    examples and the second one for the Toda lattice example.}
  \label{tab:param}
  \vspace{.5\baselineskip}

  \begin{tabular}{ll}
    \toprule
    Maximum number of training steps & $10^{5}$ \\
    Offline phase steps before training & $256$ \\
    Noise on initial condition & $0$ \\
    \midrule
    Learning rate for actor & $\{ 0.005, 0.001, 0.0005, 0.0001 \}$ \\
    Learning rate for critic & $\{ 0.005, 0.001, 0.0005, 0.0001 \}$ \\
    Exploration noise & $\{ 0.001, 0 \}$ \\
    Episode length $\tf$ & $\{ 100, 200, 500 \}$ \\
    Regularization parameter $\lambda_{\mathrm{u}}$ & $\{ 1000, 0.001, 0 \}$ or
      $\{ 10^{6}, 10^{5}, 10^{4} \}$ \\
    \bottomrule
  \end{tabular}
\end{table*}

\paragraph{Choices of hyperparameters.}%
We performed experiments with fixed random seeds for the initialization of the
neural networks and varied the hyperparameters of the learning algorithms.
\Cref{tab:param} shows an overview about the considered parameters
in the experiments.
For each possible selection from the parameter sets, an experiment has been
performed for each example.
The last parameter $\lambda_{\mathrm{u}}$ has two sets, where the first
one is used in the Allen-Cahn and tubular reactor examples and the
second one in the Toda lattice example.
Together with the variations in the neural network architectures,
$1\,728$ experiments have been run per example and for each of the
four presented methods.
Some of the hyperparameters in \Cref{tab:param} have been fixed for all
experiments.
The maximum number of training steps is fixed for comparability between
the experiments, and for variations in the number of offline steps, no
noticeable difference in the results has been observed.
We apply some noise to the initial condition when resetting a training episode
to allow the agent to explore and be trained in different scenarios.
However, we have observed that the general inability of neural networks
to approximate constants, in particular the zero, has a similar effect
as initial noise, especially in the early stages of the training.
Additional experiments have been performed to investigate the influence of the
state-space dimension $\nh$ on the training time as shown in
\Cref{fig:training_times_full}.
For these experiments, we reduced the number of considered hyperparameters by
allowing for the learning rates of actor and critic only $\{0.001, 0.0001\}$.
This enabled us to fully run the $10^{5}$ training steps also for the
\FOM{} approach.


\subsection{Numerical results}%
\label{sec:results}

In the following, we present the numerical results obtained for the different
experiments described above.
Further results beyond those presented here can be found in the
accompanying code package~\cite{supWer24}.


\subsubsection{Training overview}%
\label{sec:overview}

\begin{figure*}
  \centering
  \begin{subfigure}[b]{.495\linewidth}
    \centering
  \tikzexternalenable%
  \tikzsetnextfilename{performance_reactiondiffusion_1}%
  \begin{tikzpicture}[
  fill between/on layer = {main},
  font                  = \plotfontsize,
  text                  = black
]
  \pgfplotstableread{graphics/data/performance_reactiondiffusion_net=20_10_FOM.dat}\tableFOM
  \pgfplotstableread{graphics/data/performance_reactiondiffusion_net=20_10_Proj.dat}\tableProj
  \pgfplotstableread{graphics/data/performance_reactiondiffusion_net=20_10_ROM.dat}\tableROM
  \pgfplotstableread{graphics/data/performance_reactiondiffusion_net=20_10_ROM_Proj.dat}\tableROMProj
  
  \begin{semilogyaxis}[%
    scale only axis,
    width           = .775\linewidth,
    height          = .325\linewidth,
    xmin            = 0,
    xmax            = 0.059,
    ymin            = 1e-5,
    ymax            = 2e+3,
    y dir           = reverse,
    xminorticks     = false,
    yminorticks     = false,
    yticklabels     = {$-10^{-5}$, $-10^{-1}$, $-10^{3}$},
    scaled x ticks  = false,
    xticklabels     = {, $0$, $0.01$, $0.02$, $0.03$, $0.04$, $0.05$},
    xlabel          = {training time (h)},
    ylabel          = {norm. acc. reward $\Rn$},
    ylabel style    = {yshift = -.4em},
    xlabel style    = {yshift = .4em},
    cycle list name = performancelist
  ]

    \addplot+[name path = A]
      table[x index = 0, y expr = -\thisrowno{1}] {\tableFOM};
    \addplot+[name path = B]
      table[x index = 0, y expr = -\thisrowno{2}] {\tableFOM};
    \addplot+ fill between [of = A and B];
    \addplot+ table[x index = 0, y expr = -\thisrowno{3}] {\tableFOM};

    \addplot+[name path = A]
      table[x index = 0, y expr = -\thisrowno{1}] {\tableProj};
    \addplot+[name path = B]
      table[x index = 0, y expr = -\thisrowno{2}] {\tableProj};
    \addplot+ fill between [of = A and B];
    \addplot+ table[x index = 0, y expr = -\thisrowno{3}] {\tableProj};

    \addplot+[name path = A]
      table[x index = 0, y expr = -\thisrowno{1}] {\tableROM};
    \addplot+[name path = B]
      table[x index = 0, y expr = -\thisrowno{2}] {\tableROM};
    \addplot+ fill between [of = A and B];
    \addplot+ table[x index = 0, y expr = -\thisrowno{3}] {\tableROM};

    \addplot+[name path = A]
      table[x index = 0, y expr = -\thisrowno{1}] {\tableROMProj};
    \addplot+[name path = B]
      table[x index = 0, y expr = -\thisrowno{2}] {\tableROMProj};
    \addplot+ fill between [of = A and B];
    \addplot+ table[x index = 0, y expr = -\thisrowno{3}] {\tableROMProj};

  \end{semilogyaxis}
\end{tikzpicture}%
  \tikzexternaldisable%

    \vspace{-1.25\baselineskip}
    \caption{Allen-Cahn; network $(20, 10)$}
    \label{fig:performance_1_reactiondiffusion}
  \end{subfigure}%
  \hfill%
  \begin{subfigure}[b]{.495\linewidth}
    \centering
  \tikzexternalenable%
  \tikzsetnextfilename{performance_reactiondiffusion_2}%
  \begin{tikzpicture}[
  fill between/on layer = {main},
  font                  = \plotfontsize,
  text                  = black
]
  \pgfplotstableread{graphics/data/performance_reactiondiffusion_net=400_300_FOM.dat}\tableFOM
  \pgfplotstableread{graphics/data/performance_reactiondiffusion_net=400_300_Proj.dat}\tableProj
  \pgfplotstableread{graphics/data/performance_reactiondiffusion_net=400_300_ROM.dat}\tableROM
  \pgfplotstableread{graphics/data/performance_reactiondiffusion_net=400_300_ROM_Proj.dat}\tableROMProj
  
  \begin{semilogyaxis}[%
    scale only axis,
    width           = .775\linewidth,
    height          = .325\linewidth,
    xmin            = 0,
    xmax            = 0.43,
    ymin            = 1e-4,
    ymax            = 2e+3,
    y dir           = reverse,
    xminorticks     = false,
    yminorticks     = false,
    yticklabels     = {$-10^{-4}$, $-10^{-1}$, $-10^{2}$},
    scaled x ticks  = false,
    xlabel          = {training time (h)},
    ylabel          = {norm. acc. reward $\Rn$},
    ylabel style    = {yshift = -.4em},
    xlabel style    = {yshift = .4em},
    cycle list name = performancelist
  ]

    \addplot+[name path = A]
      table[x index = 0, y expr = -\thisrowno{1}] {\tableFOM};
    \addplot+[name path = B]
      table[x index = 0, y expr = -\thisrowno{2}] {\tableFOM};
    \addplot+ fill between [of = A and B];
    \addplot+ table[x index = 0, y expr = -\thisrowno{3}] {\tableFOM};

    \addplot+[name path = A]
      table[x index = 0, y expr = -\thisrowno{1}] {\tableProj};
    \addplot+[name path = B]
      table[x index = 0, y expr = -\thisrowno{2}] {\tableProj};
    \addplot+ fill between [of = A and B];
    \addplot+ table[x index = 0, y expr = -\thisrowno{3}] {\tableProj};

    \addplot+[name path = A]
      table[x index = 0, y expr = -\thisrowno{1}] {\tableROM};
    \addplot+[name path = B]
      table[x index = 0, y expr = -\thisrowno{2}] {\tableROM};
    \addplot+ fill between [of = A and B];
    \addplot+ table[x index = 0, y expr = -\thisrowno{3}] {\tableROM};

    \addplot+[name path = A]
      table[x index = 0, y expr = -\thisrowno{1}] {\tableROMProj};
    \addplot+[name path = B]
      table[x index = 0, y expr = -\thisrowno{2}] {\tableROMProj};
    \addplot+ fill between [of = A and B];
    \addplot+ table[x index = 0, y expr = -\thisrowno{3}] {\tableROMProj};

  \end{semilogyaxis}
\end{tikzpicture}%
  \tikzexternaldisable%

    \vspace{-1.25\baselineskip}
    \caption{Allen-Cahn; network $(400, 300)$}
    \label{fig:performance_2_reactiondiffusion}
  \end{subfigure}

  \vspace{.5\baselineskip}
  \begin{subfigure}[b]{.495\linewidth}
    \centering
  \tikzexternalenable%
  \tikzsetnextfilename{performance_tubularreactor_1}%
  \begin{tikzpicture}[
  fill between/on layer = {main},
  font                  = \plotfontsize,
  text                  = black
]
  \pgfplotstableread{graphics/data/performance_tubularreactor_net=20_10_FOM.dat}\tableFOM
  \pgfplotstableread{graphics/data/performance_tubularreactor_net=20_10_Proj.dat}\tableProj
  \pgfplotstableread{graphics/data/performance_tubularreactor_net=20_10_ROM.dat}\tableROM
  \pgfplotstableread{graphics/data/performance_tubularreactor_net=20_10_ROM_Proj.dat}\tableROMProj
  
  \begin{semilogyaxis}[%
    scale only axis,
    width           = .775\linewidth,
    height          = .325\linewidth,
    xmin            = 0,
    xmax            = 0.24,
    ymin            = 1e-4,
    ymax            = 1e+5,
    y dir           = reverse,
    xminorticks     = false,
    yminorticks     = false,
    yticklabels     = {$-10^{-4}$, $-10^{1}$, $-10^{4}$},
    scaled x ticks  = false,
    xticklabels     = {, $0$, $0.05$, $0.1$, $0.15$, $0.2$},
    xlabel          = {training time (h)},
    ylabel          = {norm. acc. reward $\Rn$},
    ylabel style    = {yshift = -.4em},
    xlabel style    = {yshift = .4em},
    cycle list name = performancelist
  ]

    \addplot+[name path = A]
      table[x index = 0, y expr = -\thisrowno{1}] {\tableFOM};
    \addplot+[name path = B]
      table[x index = 0, y expr = -\thisrowno{2}] {\tableFOM};
    \addplot+ fill between [of = A and B];
    \addplot+ table[x index = 0, y expr = -\thisrowno{3}] {\tableFOM};

    \addplot+[name path = A]
      table[x index = 0, y expr = -\thisrowno{1}] {\tableProj};
    \addplot+[name path = B]
      table[x index = 0, y expr = -\thisrowno{2}] {\tableProj};
    \addplot+ fill between [of = A and B];
    \addplot+ table[x index = 0, y expr = -\thisrowno{3}] {\tableProj};

    \addplot+[name path = A]
      table[x index = 0, y expr = -\thisrowno{1}] {\tableROM};
    \addplot+[name path = B]
      table[x index = 0, y expr = -\thisrowno{2}] {\tableROM};
    \addplot+ fill between [of = A and B];
    \addplot+ table[x index = 0, y expr = -\thisrowno{3}] {\tableROM};

    \addplot+[name path = A]
      table[x index = 0, y expr = -\thisrowno{1}] {\tableROMProj};
    \addplot+[name path = B]
      table[x index = 0, y expr = -\thisrowno{2}] {\tableROMProj};
    \addplot+ fill between [of = A and B];
    \addplot+ table[x index = 0, y expr = -\thisrowno{3}] {\tableROMProj};

  \end{semilogyaxis}
\end{tikzpicture}%
  \tikzexternaldisable%

    \vspace{-1.25\baselineskip}
    \caption{tubular reactor; network $(20, 10)$}
    \label{fig:performance_1_tubularreactor}
  \end{subfigure}
  \hfill
  \begin{subfigure}[b]{.495\linewidth}
    \centering
  \tikzexternalenable%
  \tikzsetnextfilename{performance_tubularreactor_2}%
  \begin{tikzpicture}[
  fill between/on layer = {main},
  font                  = \plotfontsize,
  text                  = black
]
  \pgfplotstableread{graphics/data/performance_tubularreactor_net=400_300_FOM.dat}\tableFOM
  \pgfplotstableread{graphics/data/performance_tubularreactor_net=400_300_Proj.dat}\tableProj
  \pgfplotstableread{graphics/data/performance_tubularreactor_net=400_300_ROM.dat}\tableROM
  \pgfplotstableread{graphics/data/performance_tubularreactor_net=400_300_ROM_Proj.dat}\tableROMProj
  
  \begin{semilogyaxis}[%
    scale only axis,
    width           = .775\linewidth,
    height          = .325\linewidth,
    xmin            = 0,
    xmax            = 0.54,
    ymin            = 1e-4,
    ymax            = 1e+6,
    y dir           = reverse,
    xminorticks     = false,
    yminorticks     = false,
    yticklabels     = {$-10^{-4}$, $-10^{1}$, $-10^{6}$},
    scaled x ticks  = false,
    xticklabels     = {, $0$, $0.1$, $0.2$, $0.3$, $0.4$, $0.5$},
    xlabel          = {training time (h)},
    ylabel          = {norm. acc. reward $\Rn$},
    ylabel style    = {yshift = -.4em},
    xlabel style    = {yshift = .4em},
    cycle list name = performancelist
  ]

    \addplot+[name path = A]
      table[x index = 0, y expr = -\thisrowno{1}] {\tableFOM};
    \addplot+[name path = B]
      table[x index = 0, y expr = -\thisrowno{2}] {\tableFOM};
    \addplot+ fill between [of = A and B];
    \addplot+ table[x index = 0, y expr = -\thisrowno{3}] {\tableFOM};

    \addplot+[name path = A]
      table[x index = 0, y expr = -\thisrowno{1}] {\tableProj};
    \addplot+[name path = B]
      table[x index = 0, y expr = -\thisrowno{2}] {\tableProj};
    \addplot+ fill between [of = A and B];
    \addplot+ table[x index = 0, y expr = -\thisrowno{3}] {\tableProj};

    \addplot+[name path = A]
      table[x index = 0, y expr = -\thisrowno{1}] {\tableROM};
    \addplot+[name path = B]
      table[x index = 0, y expr = -\thisrowno{2}] {\tableROM};
    \addplot+ fill between [of = A and B];
    \addplot+ table[x index = 0, y expr = -\thisrowno{3}] {\tableROM};

    \addplot+[name path = A]
      table[x index = 0, y expr = -\thisrowno{1}] {\tableROMProj};
    \addplot+[name path = B]
      table[x index = 0, y expr = -\thisrowno{2}] {\tableROMProj};
    \addplot+ fill between [of = A and B];
    \addplot+ table[x index = 0, y expr = -\thisrowno{3}] {\tableROMProj};

  \end{semilogyaxis}
\end{tikzpicture}%
  \tikzexternaldisable%

    \vspace{-1.25\baselineskip}
    \caption{tubular reactor; network $(400, 300)$}
    \label{fig:performance_2_tubularreactor}
  \end{subfigure}

  \vspace{.5\baselineskip}
  \begin{subfigure}[b]{.495\linewidth}
    \centering
  \tikzexternalenable%
  \tikzsetnextfilename{performance_todalattice_1}%
  \begin{tikzpicture}[
  fill between/on layer = {main},
  font                  = \plotfontsize,
  text                  = black
]
  \pgfplotstableread{graphics/data/performance_todalattice_net=20_10_FOM.dat}\tableFOM
  \pgfplotstableread{graphics/data/performance_todalattice_net=20_10_Proj.dat}\tableProj
  \pgfplotstableread{graphics/data/performance_todalattice_net=20_10_ROM.dat}\tableROM
  \pgfplotstableread{graphics/data/performance_todalattice_net=20_10_ROM_Proj.dat}\tableROMProj
  
  \begin{semilogyaxis}[%
    scale only axis,
    width           = .775\linewidth,
    height          = .325\linewidth,
    xmin            = 0,
    xmax            = 0.137,
    ymin            = 1e-2,
    ymax            = 1e+6,
    y dir           = reverse,
    xminorticks     = false,
    yminorticks     = false,
    yticklabels     = {$-10^{-2}$, $-10^{2}$, $-10^{6}$},
    scaled x ticks  = false,
    xticklabels     = {, $0$, $0.02$, $0.04$, $0.06$, $0.08$, $0.1$, $0.12$},
    xlabel          = {training time (h)},
    ylabel          = {norm. acc. reward $\Rn$},
    ylabel style    = {yshift = -.4em},
    xlabel style    = {yshift = .4em},
    cycle list name = performancelist
  ]

    \addplot+[name path = A]
      table[x index = 0, y expr = -\thisrowno{1}] {\tableFOM};
    \addplot+[name path = B]
      table[x index = 0, y expr = -\thisrowno{2}] {\tableFOM};
    \addplot+ fill between [of = A and B];
    \addplot+ table[x index = 0, y expr = -\thisrowno{3}] {\tableFOM};

    \addplot+[name path = A]
      table[x index = 0, y expr = -\thisrowno{1}] {\tableProj};
    \addplot+[name path = B]
      table[x index = 0, y expr = -\thisrowno{2}] {\tableProj};
    \addplot+ fill between [of = A and B];
    \addplot+ table[x index = 0, y expr = -\thisrowno{3}] {\tableProj};

    \addplot+[name path = A]
      table[x index = 0, y expr = -\thisrowno{1}] {\tableROM};
    \addplot+[name path = B]
      table[x index = 0, y expr = -\thisrowno{2}] {\tableROM};
    \addplot+ fill between [of = A and B];
    \addplot+ table[x index = 0, y expr = -\thisrowno{3}] {\tableROM};

    \addplot+[name path = A]
      table[x index = 0, y expr = -\thisrowno{1}] {\tableROMProj};
    \addplot+[name path = B]
      table[x index = 0, y expr = -\thisrowno{2}] {\tableROMProj};
    \addplot+ fill between [of = A and B];
    \addplot+ table[x index = 0, y expr = -\thisrowno{3}] {\tableROMProj};

  \end{semilogyaxis}
\end{tikzpicture}%
  \tikzexternaldisable%

    \vspace{-1.25\baselineskip}
    \caption{Toda lattice; network $(20, 10)$}
    \label{fig:performance_1_todalattice1}
  \end{subfigure}%
  \hfill%
  \begin{subfigure}[b]{.495\linewidth}
    \centering
  \tikzexternalenable%
  \tikzsetnextfilename{performance_todalattice_2}%
  \begin{tikzpicture}[
  fill between/on layer = {main},
  font                  = \plotfontsize,
  text                  = black
]
  \pgfplotstableread{graphics/data/performance_todalattice_net=400_300_FOM.dat}\tableFOM
  \pgfplotstableread{graphics/data/performance_todalattice_net=400_300_Proj.dat}\tableProj
  \pgfplotstableread{graphics/data/performance_todalattice_net=400_300_ROM.dat}\tableROM
  \pgfplotstableread{graphics/data/performance_todalattice_net=400_300_ROM_Proj.dat}\tableROMProj
  
  \begin{semilogyaxis}[%
    scale only axis,
    width           = .775\linewidth,
    height          = .325\linewidth,
    xmin            = 0,
    xmax            = 0.42,
    ymin            = 1e-3,
    ymax            = 1e+6,
    y dir           = reverse,
    xminorticks     = false,
    yminorticks     = false,
    yticklabels     = {$-10^{-3}$, $-10^{1}$, $-10^{5}$},
    scaled x ticks  = false,
    xlabel          = {training time (h)},
    ylabel          = {norm. acc. reward $\Rn$},
    ylabel style    = {yshift = -.4em},
    xlabel style    = {yshift = .4em},
    cycle list name = performancelist
  ]

    \addplot+[name path = A]
      table[x index = 0, y expr = -\thisrowno{1}] {\tableFOM};
    \addplot+[name path = B]
      table[x index = 0, y expr = -\thisrowno{2}] {\tableFOM};
    \addplot+ fill between [of = A and B];
    \addplot+ table[x index = 0, y expr = -\thisrowno{3}] {\tableFOM};

    \addplot+[name path = A]
      table[x index = 0, y expr = -\thisrowno{1}] {\tableProj};
    \addplot+[name path = B]
      table[x index = 0, y expr = -\thisrowno{2}] {\tableProj};
    \addplot+ fill between [of = A and B];
    \addplot+ table[x index = 0, y expr = -\thisrowno{3}] {\tableProj};

    \addplot+[name path = A]
      table[x index = 0, y expr = -\thisrowno{1}] {\tableROM};
    \addplot+[name path = B]
      table[x index = 0, y expr = -\thisrowno{2}] {\tableROM};
    \addplot+ fill between [of = A and B];
    \addplot+ table[x index = 0, y expr = -\thisrowno{3}] {\tableROM};

    \addplot+[name path = A]
      table[x index = 0, y expr = -\thisrowno{1}] {\tableROMProj};
    \addplot+[name path = B]
      table[x index = 0, y expr = -\thisrowno{2}] {\tableROMProj};
    \addplot+ fill between [of = A and B];
    \addplot+ table[x index = 0, y expr = -\thisrowno{3}] {\tableROMProj};

  \end{semilogyaxis}
\end{tikzpicture}%
  \tikzexternaldisable%

    \vspace{-1.25\baselineskip}
    \caption{Toda lattice; network $(400, 300)$}
    \label{fig:performance_2_todalattice1}
  \end{subfigure}%

  \vspace{.5\baselineskip}
  \begin{subfigure}[b]{.495\linewidth}
    \centering
  \tikzexternalenable%
  \tikzsetnextfilename{performance_todalattice_zoom_1}%
  \begin{tikzpicture}[
  fill between/on layer = {main},
  font                  = \plotfontsize,
  text                  = black
]
  \pgfplotstableread{graphics/data/performance_todalattice_net=20_10_FOM.dat}\tableFOM
  \pgfplotstableread{graphics/data/performance_todalattice_net=20_10_Proj.dat}\tableProj
  \pgfplotstableread{graphics/data/performance_todalattice_net=20_10_ROM.dat}\tableROM
  \pgfplotstableread{graphics/data/performance_todalattice_net=20_10_ROM_Proj.dat}\tableROMProj
  
  \begin{semilogyaxis}[%
    scale only axis,
    width           = .775\linewidth,
    height          = .325\linewidth,
    xmin            = 0,
    xmax            = 0.022,
    ymin            = 1e-2,
    ymax            = 1e+2,
    y dir           = reverse,
    xminorticks     = false,
    yminorticks     = false,
    yticklabels     = {, $-10^{-1}$, $-10^{1}$},
    scaled x ticks  = false,
    xticklabels     = {, $0$, $0.005$, $0.01$, $0.015$, $0.02$},
    xlabel          = {training time (h)},
    ylabel          = {norm. acc. reward $\Rn$},
    ylabel style    = {yshift = -.4em},
    xlabel style    = {yshift = .4em},
    cycle list name = performancelist
  ]

    \addplot+[name path = A]
      table[x index = 0, y expr = -\thisrowno{1}] {\tableFOM};
    \addplot+[name path = B]
      table[x index = 0, y expr = -\thisrowno{2}] {\tableFOM};
    \addplot+ fill between [of = A and B];
    \addplot+ table[x index = 0, y expr = -\thisrowno{3}] {\tableFOM};

    \addplot+[name path = A]
      table[x index = 0, y expr = -\thisrowno{1}] {\tableProj};
    \addplot+[name path = B]
      table[x index = 0, y expr = -\thisrowno{2}] {\tableProj};
    \addplot+ fill between [of = A and B];
    \addplot+ table[x index = 0, y expr = -\thisrowno{3}] {\tableProj};

    \addplot+[name path = A]
      table[x index = 0, y expr = -\thisrowno{1}] {\tableROM};
    \addplot+[name path = B]
      table[x index = 0, y expr = -\thisrowno{2}] {\tableROM};
    \addplot+ fill between [of = A and B];
    \addplot+ table[x index = 0, y expr = -\thisrowno{3}] {\tableROM};

    \addplot+[name path = A]
      table[x index = 0, y expr = -\thisrowno{1}] {\tableROMProj};
    \addplot+[name path = B]
      table[x index = 0, y expr = -\thisrowno{2}] {\tableROMProj};
    \addplot+ fill between [of = A and B];
    \addplot+ table[x index = 0, y expr = -\thisrowno{3}] {\tableROMProj};

  \end{semilogyaxis}
\end{tikzpicture}%
  \tikzexternaldisable%

    \vspace{-1.25\baselineskip}
    \caption{Toda lattice (zoom in); network $(20, 10)$}
    \label{fig:performance_1_todalattice2}
  \end{subfigure}
  \hfill
  \begin{subfigure}[b]{.495\linewidth}
    \centering
  \tikzexternalenable%
  \tikzsetnextfilename{performance_todalattice_zoom_2}%
  \begin{tikzpicture}[
  fill between/on layer = {main},
  font                  = \plotfontsize,
  text                  = black
]
  \pgfplotstableread{graphics/data/performance_todalattice_net=400_300_FOM.dat}\tableFOM
  \pgfplotstableread{graphics/data/performance_todalattice_net=400_300_Proj.dat}\tableProj
  \pgfplotstableread{graphics/data/performance_todalattice_net=400_300_ROM.dat}\tableROM
  \pgfplotstableread{graphics/data/performance_todalattice_net=400_300_ROM_Proj.dat}\tableROMProj
  
  \begin{semilogyaxis}[%
    scale only axis,
    width           = .775\linewidth,
    height          = .325\linewidth,
    xmin            = 0,
    xmax            = 0.18,
    ymin            = 4e-3,
    ymax            = 4e+0,
    y dir           = reverse,
    xminorticks     = false,
    yminorticks     = false,
    yticklabels     = {, $-10^{-2}$, $-10^{-1}$, $-10^{0}$},
    scaled x ticks  = false,
    xticklabels     = {, $0$, $0.05$, $0.1$, $0.15$},
    xlabel          = {training time (h)},
    ylabel          = {norm. acc. reward $\Rn$},
    ylabel style    = {yshift = -.4em},
    xlabel style    = {yshift = .4em},
    cycle list name = performancelist
  ]

    \addplot+[name path = A]
      table[x index = 0, y expr = -\thisrowno{1}] {\tableFOM};
    \addplot+[name path = B]
      table[x index = 0, y expr = -\thisrowno{2}] {\tableFOM};
    \addplot+ fill between [of = A and B];
    \addplot+ table[x index = 0, y expr = -\thisrowno{3}] {\tableFOM};

    \addplot+[name path = A]
      table[x index = 0, y expr = -\thisrowno{1}] {\tableProj};
    \addplot+[name path = B]
      table[x index = 0, y expr = -\thisrowno{2}] {\tableProj};
    \addplot+ fill between [of = A and B];
    \addplot+ table[x index = 0, y expr = -\thisrowno{3}] {\tableProj};

    \addplot+[name path = A]
      table[x index = 0, y expr = -\thisrowno{1}] {\tableROM};
    \addplot+[name path = B]
      table[x index = 0, y expr = -\thisrowno{2}] {\tableROM};
    \addplot+ fill between [of = A and B];
    \addplot+ table[x index = 0, y expr = -\thisrowno{3}] {\tableROM};

    \addplot+[name path = A]
      table[x index = 0, y expr = -\thisrowno{1}] {\tableROMProj};
    \addplot+[name path = B]
      table[x index = 0, y expr = -\thisrowno{2}] {\tableROMProj};
    \addplot+ fill between [of = A and B];
    \addplot+ table[x index = 0, y expr = -\thisrowno{3}] {\tableROMProj};

  \end{semilogyaxis}
\end{tikzpicture}%
  \tikzexternaldisable%

    \vspace{-1.25\baselineskip}
    \caption{Toda lattice (zoom in); network $(400, 300)$}
    \label{fig:performance_2_todalattice2}
  \end{subfigure}

  \vspace{.5\baselineskip}
  \tikzexternalenable%
  \tikzsetnextfilename{performance_legend}%
  \begin{tikzpicture}[font = \legendfontsize, text = black]
  \begin{axis}[%
    hide axis,
    width  = 1mm,
    height = 1mm,
    scale only axis,
    xmin = 0,
    xmax = 1,
    ymin = 0,
    ymax = 1,
    legend columns = -1, 
    legend style   = {
      at     = {(0,0)},
      anchor = center,
      /tikz/every even column/.append style = {column sep = 0.4cm}},
    legend cell align  = {left},
    clip mode          = individual,
    cycle list name    = performancelist
  ]

    \pgfplotsset{cycle list shift = 3}
    \addplot+ coordinates{(0, 0)};
    \addlegendentry{\FOM{}}

    \pgfplotsset{cycle list shift = 6}
    \addplot+ coordinates{(0, 0)};
    \addlegendentry{\Proj{}}

    \pgfplotsset{cycle list shift = 9}
    \addplot+ coordinates{(0, 0)};
    \addlegendentry{\ROM{}}

    \pgfplotsset{cycle list shift = 12}
    \addplot+ coordinates{(0, 0)};
    \addlegendentry{\ROMProj{}}
  \end{axis}
\end{tikzpicture}%
  \tikzexternaldisable%

  \caption{Comparison of normalized accumulated rewards:
    Plots (a)--(d) show that the approach \ROMProj{} that uses the
    high-dimensional system and the latent model together achieves higher
    rewards than \ROM{} that uses the latent model alone.
    For the Toda lattice example with results shown in (e)--(g), \ROMProj{}
    achieves similar rewards as \FOM{}.
    Note that in the Toda lattice example, \ROM{} achieves the highest rewards
    as the latent model of the unstable dynamics hides many of the strongly
    nonlinear dynamics that affect the stabilization.}
  \label{fig:performance}
\end{figure*}
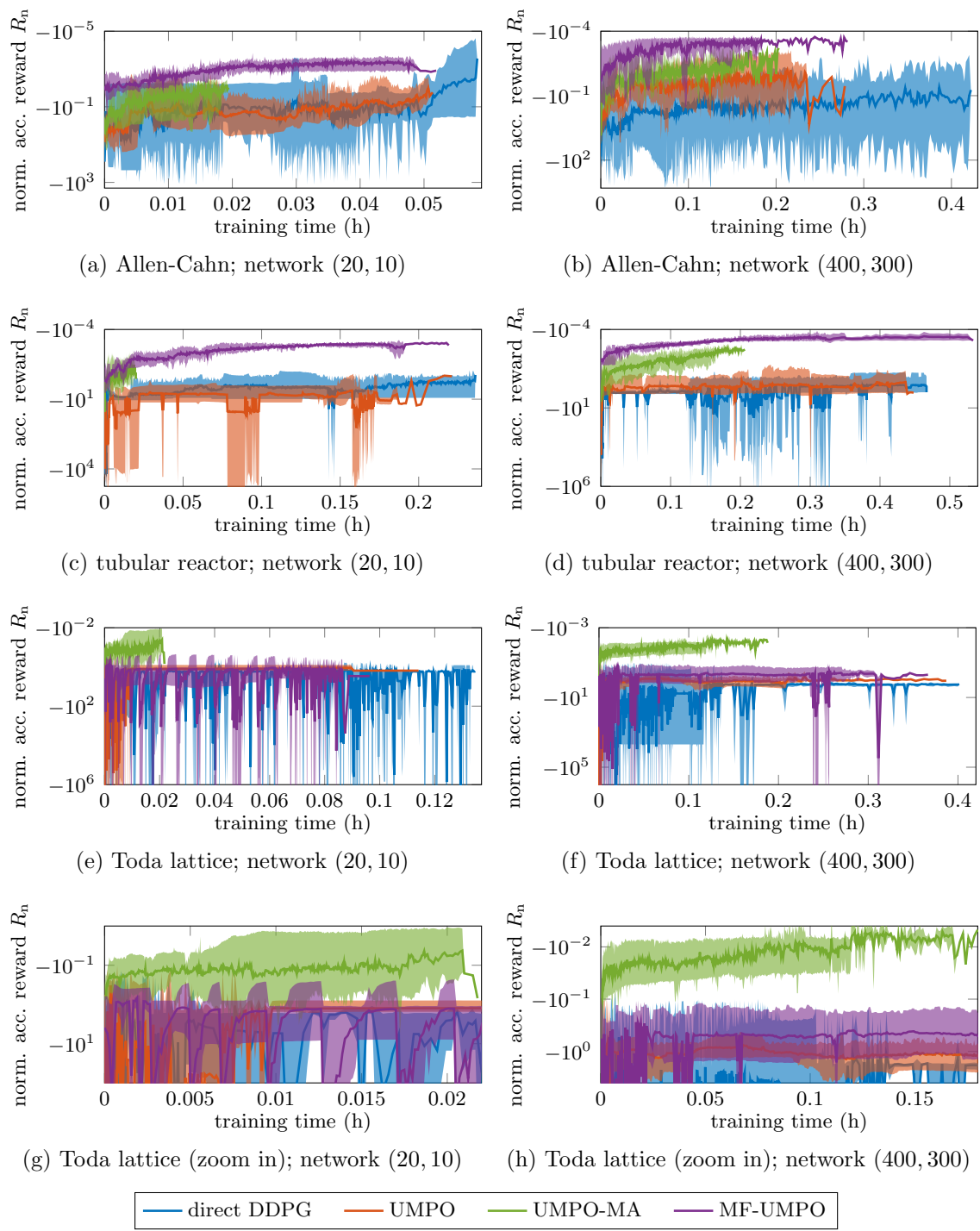

The detailed training behavior for selected policies is shown in
\Cref{fig:performance}.
Since a significant amount of hyperparameter setups did not result in
stabilizing policies in the test simulation, in particular for the classical
\FOM{} approach, we filtered the results to show in
\Cref{fig:performance} only the five best policies that
stabilize the systems with smallest distance to the desired steady state.
The training time of \ROM{} is not included in the plots of \ROMProj{} in these
figures.
Additionally, the training runs of \FOM{} have been restricted in terms of
computation time to be at most as long as the overall slowest runs of
\Proj{} for the respective neural network sizes.
Comparing the results for the Allen-Cahn equation shows that in both cases the
new approaches reach faster higher rewards than \FOM{}.
For smaller networks such as $(20,10)$, the rewards of \Proj{} behave
similar to \FOM{}.
\ROM{} and \ROMProj{} provide higher rewards faster during their training
phase, but \FOM{} is able to catch up to these after $0.05$\,h.
In contrast, for larger networks such as $(400, 200)$, the new approaches
\Proj{}, \ROM{} and \ROMProj{} clearly outperform the classical \FOM{} method
in terms of reward versus training time.
While as in \Cref{fig:performance_1_reactiondiffusion} it would be possible
for \FOM{} to improve its rewards for longer training times, it already runs
twice as long as the other approaches implying that significantly more
resources are needed for \FOM{} to give higher rewards here.
For the tubular reactor, the results in
\Cref{fig:performance_1_tubularreactor,fig:performance_2_tubularreactor}
look very similar to each other, with the rewards of \Proj{} behaving
similar to \FOM{} but with shorter training times.
\ROM{} gives higher rewards than \ROM{} and \Proj{}, which are further
improved by \ROMProj{}.
Lastly, we have the Toda lattice example in
\Cref{fig:performance_1_todalattice1,fig:performance_1_todalattice2} as well
as in \Cref{fig:performance_2_todalattice1,fig:performance_2_todalattice2}.
The strong nonlinearity is a challenge for reinforcement learning methods that
are trained based on querying the high-dimensional
system~\cref{eqn:sys}.
This can be also seen here as \FOM{}, \Proj{} and \ROMProj{} provide similar
reward behaviors, while \ROM{} has the best performance.
However, we can see in
\Cref{fig:performance_1_todalattice2,fig:performance_2_todalattice2} that
\ROMProj{} still yields consistently better rewards than \FOM{}.
For this example, \Proj{} and \ROMProj{} achieve improvements by a factor
of $2$ in terms of the mean rewards over \FOM{}.

\begin{figure*}[t]
  \centering
  \tikzexternalenable%
  \tikzsetnextfilename{queries_full}%
  \begin{tikzpicture}[font = \plotfontsize, text = black]
  \begin{axis}[
    scale only axis,
    width             = .775\linewidth,
    height            = .19\linewidth,
    ybar,
    bar width         = 6pt,
    ymin              = 0,
    ymax              = 1e+5,
    ylabel            = {mean \# system queries},
    ymajorgrids,
    ylabel style      = {yshift = -.4em},
    xtick             = data,
    symbolic x coords = {test11, test12, test21, test22, test31, test32},
    xticklabel style  = {yshift = .4em},
    point meta        = explicit symbolic,
    xticklabels       = {(20{,} 10), (400{,} 300), (20{,} 10), (400{,} 300),
                          (20{,} 10), (400{,} 300)},
    legend columns    = -1,
    legend cell align = {left},
    legend style      = {
      at                                    = {(.46,-0.375)},
      anchor                                = north,
      /tikz/every even column/.append style = {column sep = 0.25cm}},
    cycle list name   = barlist
  ]

    \addplot+[ybar] coordinates{
      (test11, 15060.40)
      (test12, 38396.20)
      (test21, 6458.00)
      (test22, 27102.20)
      (test31, 11303.80)
      (test32, 36618.60)
    };
    \addlegendentry{\legendfontsize \FOM{}}

    \addplot+[ybar] coordinates{
      (test11, 39388.60)
      (test12, 85521.60)
      (test21, 10586.60)
      (test22, 47937.00)
      (test31, 23839.80)
      (test32, 77800.00)
    };
    \addlegendentry{\legendfontsize \Proj{}}

    \addplot+[ybar] coordinates{
      (test11, 99942.60)
      (test12, 99586.60)
      (test21, 99938.00)
      (test22, 99057.40)
      (test31, 97319.00)
      (test32, 99807.00)
    };
    \addlegendentry{\legendfontsize \ROM{}}

    \addplot+[ybar] coordinates{
      (test11, 42740.00)
      (test12, 94200.00)
      (test21, 10680.00)
      (test22, 50860.00)
      (test31, 23044.60)
      (test32, 64743.40)
    };
    \addlegendentry{\legendfontsize \ROMProj{}}
  \end{axis}

  \node at (1.95, -.7) [anchor = center] {\textbf{Allen-Cahn}};
  \node at (5.8, -.7) [anchor = center] {\textbf{Tubular reactor}};
  \node at (9.7, -.7) [anchor = center] {\textbf{Toda lattice}};
\end{tikzpicture}%
  \tikzexternaldisable%

  \caption{Comparison of the mean amount of system queries after
    $0.02$\,h and $0.2$\,h training time for small and large
    neural networks.
    In all examples, \ROM{} manages to query the latent model at least $10^{5}$
    times, which is the maximal amount allowed.
    \Proj{} and \ROMProj{} manage to query the system more often than \FOM{},
    leading to speedups ranging from around $1.7$ up to $2.2$.}
  \label{fig:queries_full}
\end{figure*}
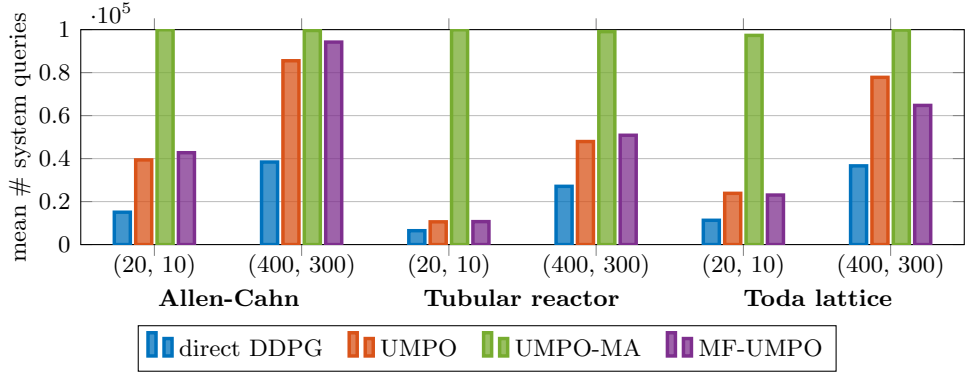

In \Cref{fig:queries_full}, the mean amount of system queries up to the
training times $0.02$\,h for small neural networks and $0.2$\,h for
large neural networks are shown.
In all examples, \ROM{} is able to (nearly) perform the maximum allowed
$10^{5}$ system queries.
Also, \Proj{} and \ROMProj{} always outperform \FOM{}, with a speed up factor
of around $1.7$ to $2.2$, which allows these methods to reach higher rewards
quicker as they can perform more training steps in a smaller amount of time.


\subsubsection{Quality of learned policies}%
\label{sec:quality}

\begin{figure*}[t]
  \centering
  \begin{subfigure}[b]{.495\textwidth}
    \centering
  \tikzexternalenable%
  \tikzsetnextfilename{tubularreactor_temp_FOM_full}%
  \begin{tikzpicture}[font = \plotfontsize, text = black]
  \begin{axis}[%
    scale only axis,
    width              = .775\linewidth,
    height             = .325\linewidth,
    xmin               = 0,
    xmax               = 30,
    ymin               = 0,
    ymax               = 1,
    xminorticks        = false,
    yminorticks        = false,
    xlabel             = {time $\tau \cdot t$},
    ylabel             = {spatial coordinates},
    ylabel style       = {yshift = -.4em},
    xlabel style       = {yshift = .4em},
    scaled x ticks     = false,
    x tick label style = {/pgf/number format/1000 sep={\,}},
    y tick label style = {/pgf/number format/1000 sep={\,}}
  ]
  
    \addplot graphics[xmin = 0, xmax = 30, ymin = 0, ymax = 1]
        {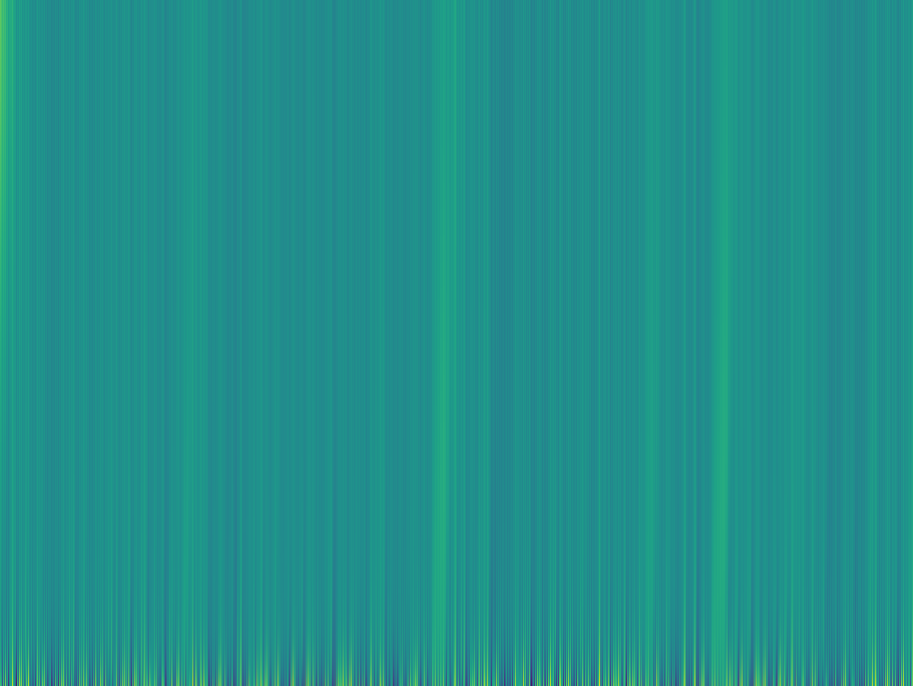};
  \end{axis}
\end{tikzpicture}%
  \tikzexternaldisable%

    \vspace{-.5\baselineskip}
    \caption{\FOM{}}
  \end{subfigure}%
  \hfill%
  \begin{subfigure}[b]{.495\textwidth}
    \centering
  \tikzexternalenable%
  \tikzsetnextfilename{tubularreactor_temp_Proj_full}%
  \begin{tikzpicture}[font = \plotfontsize, text = black]
  \begin{axis}[%
    scale only axis,
    width              = .775\linewidth,
    height             = .325\linewidth,
    xmin               = 0,
    xmax               = 30,
    ymin               = 0,
    ymax               = 1,
    xminorticks        = false,
    yminorticks        = false,
    xlabel             = {time $\tau \cdot t$},
    ylabel             = {spatial coordinates},
    ylabel style       = {yshift = -.4em},
    xlabel style       = {yshift = .4em},
    scaled x ticks     = false,
    x tick label style = {/pgf/number format/1000 sep={\,}},
    y tick label style = {/pgf/number format/1000 sep={\,}}
  ]
  
    \addplot graphics[xmin = 0, xmax = 30, ymin = 0, ymax = 1]
        {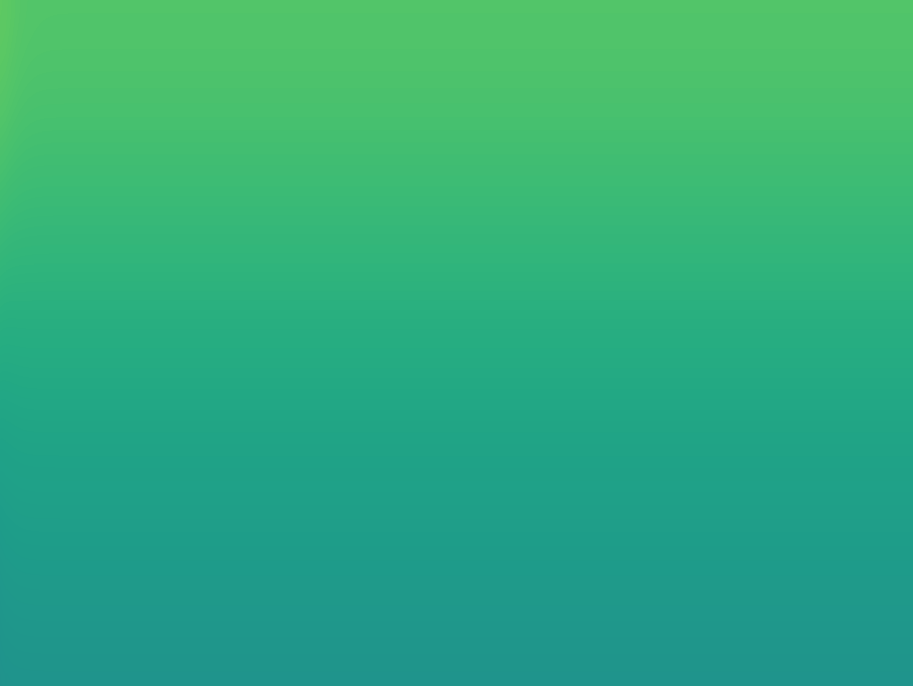};
  \end{axis}
\end{tikzpicture}%
  \tikzexternaldisable%

    \vspace{-.5\baselineskip}
    \caption{\Proj{}}
  \end{subfigure}

  \vspace{.5\baselineskip}
  \begin{subfigure}[b]{.495\textwidth}
    \centering
  \tikzexternalenable%
  \tikzsetnextfilename{tubularreactor_temp_ROM_full}%
  \begin{tikzpicture}[font = \plotfontsize, text = black] 
  \begin{axis}[%
    scale only axis,
    width              = .775\linewidth,
    height             = .325\linewidth,
    xmin               = 0,
    xmax               = 30,
    ymin               = 0,
    ymax               = 1,
    xminorticks        = false,
    yminorticks        = false,
    xlabel             = {time $\tau \cdot t$},
    ylabel             = {spatial coordinates},
    ylabel style       = {yshift = -.4em},
    xlabel style       = {yshift = .4em},
    scaled x ticks     = false,
    x tick label style = {/pgf/number format/1000 sep={\,}},
    y tick label style = {/pgf/number format/1000 sep={\,}}
  ]
  
    \addplot graphics[xmin = 0, xmax = 30, ymin = 0, ymax = 1]
        {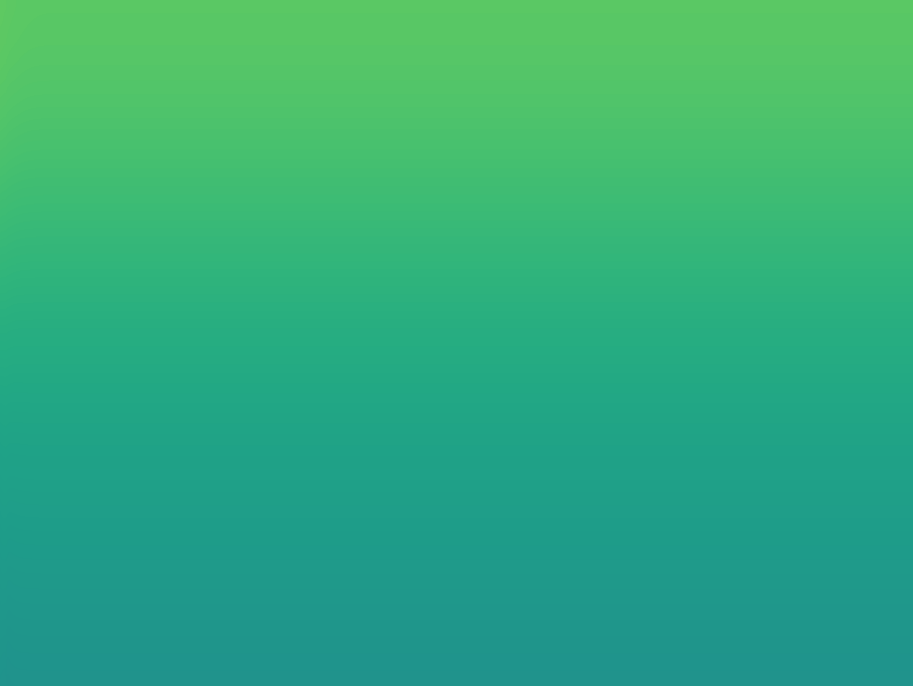};
  \end{axis}
\end{tikzpicture}%
  \tikzexternaldisable%

    \vspace{-.5\baselineskip}
    \caption{\ROM{}}
  \end{subfigure}%
  \hfill%
  \begin{subfigure}[b]{.495\textwidth}
    \centering
  \tikzexternalenable%
  \tikzsetnextfilename{tubularreactor_temp_ROMProj_full}%
  \begin{tikzpicture}[font = \plotfontsize, text = black]
  \begin{axis}[%
    scale only axis,
    width              = .775\linewidth,
    height             = .325\linewidth,
    xmin               = 0,
    xmax               = 30,
    ymin               = 0,
    ymax               = 1,
    xminorticks        = false,
    yminorticks        = false,
    xlabel             = {time $\tau \cdot t$},
    ylabel             = {spatial coordinates},
    ylabel style       = {yshift = -.4em},
    xlabel style       = {yshift = .4em},
    scaled x ticks     = false,
    x tick label style = {/pgf/number format/1000 sep={\,}},
    y tick label style = {/pgf/number format/1000 sep={\,}}
  ]
  
    \addplot graphics[xmin = 0, xmax = 30, ymin = 0, ymax = 1]
        {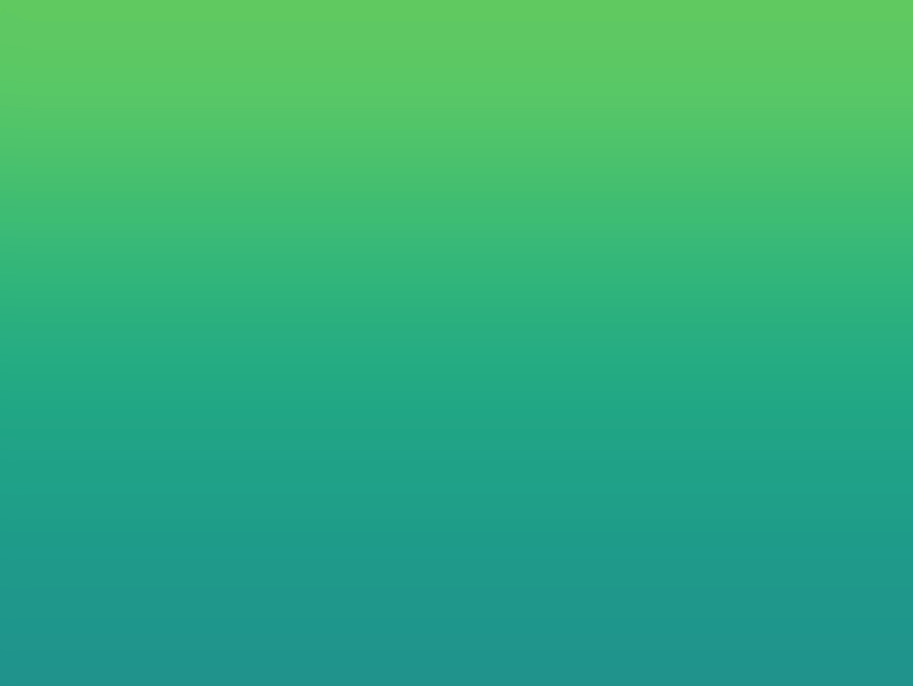};
  \end{axis}
\end{tikzpicture}%
  \tikzexternaldisable%

    \vspace{-.5\baselineskip}
    \caption{\ROMProj{}}
  \end{subfigure}

  \vspace{.5\baselineskip}
  \tikzexternalenable%
  \tikzsetnextfilename{tubularreactor_temp_legend}%
  \begin{tikzpicture}[font = \legendfontsize, text = black]
  \node[draw = none, minimum width = 0cm, inner sep = 0cm](start){};
  \node(leg) at (start.north east) [anchor = north west]{\tikz
  \begin{axis}[%
    hide axis,
    scale only axis,
    width  = 10cm,
    height = .1cm,
    point meta min = 0.7199,
    point meta max = 1.3036,
    colorbar,
    colorbar horizontal,
    colorbar style = {
      at = {(.5, 0)},
      anchor = north},
    scaled x ticks = false,
    x tick label style = {/pgf/number format/fixed}]
  \end{axis};};
  \node[draw = none, minimum width = 0cm, inner sep = 0cm](end)
    at (leg.north east) [anchor = north west]{};
\end{tikzpicture}%
  \tikzexternaldisable%

  \caption{Temperature profiles of the tubular reactor with policies obtained
    after $0.2$\,h of training time with neural network architecture
    $(400, 300)$.
    The plots show that \FOM{} fails to stabilize the reactor where as \Proj{}
    provides a stabilizing policy, with \ROMProj{} achieving lowest
    oscillations and deriving the system closest to the desired steady state.}
  \label{fig:tubularreactor_temp_full}
\end{figure*}

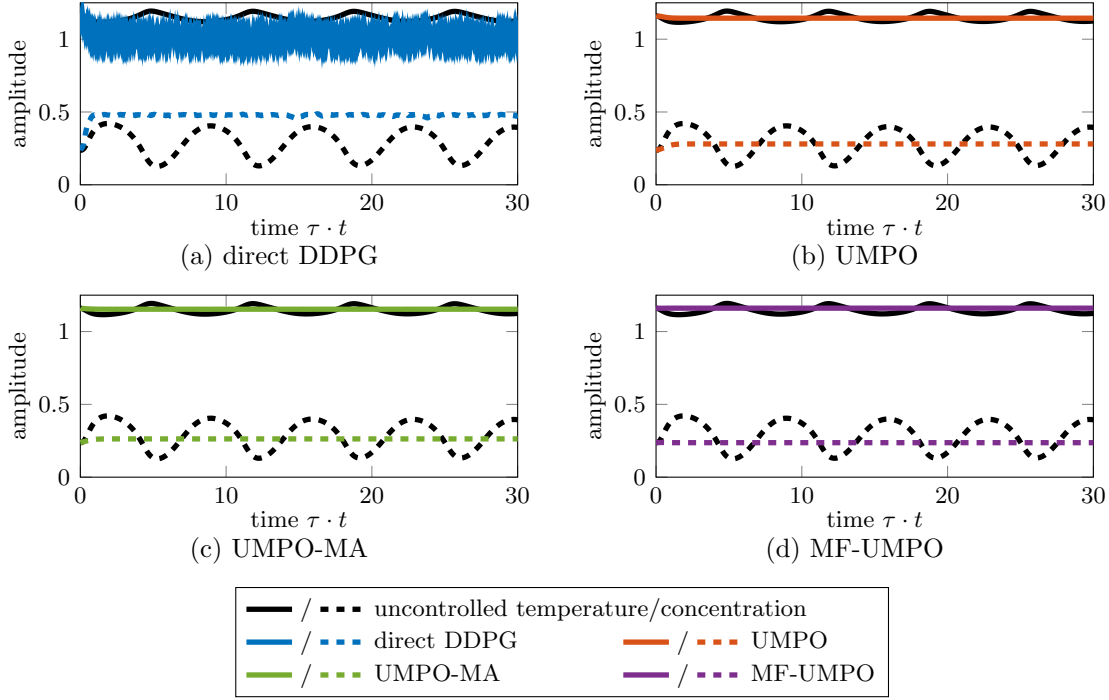
\begin{figure*}[t]
  \centering
  \begin{subfigure}[b]{.495\textwidth}
    \centering
  \tikzexternalenable%
  \tikzsetnextfilename{tubularreactorFOM}%
  \begin{tikzpicture}[font = \plotfontsize, text = black]
  \pgfplotstableread{graphics/data/tubularreactorFOM.dat}\tableSIM

  \begin{axis}[
    scale only axis,
    width        = .775\linewidth,
    height       = .325\linewidth,
    xmin         = 0,
    xmax         = 30,
    ymin         = 0,
    ymax         = 1.25,
    xminorticks  = false,
    yminorticks  = false,
    xlabel       = {time $\tau \cdot t$},
    ylabel       = {amplitude},
    ylabel style = {yshift = -.4em},
    xlabel style = {yshift = .4em}
  ]
  
    \addplot+[lines, black, mark = none]
      table[x index = 0, y index = 1] {\tableSIM};
    \addplot+[lines, black, mark = none, dashed]
      table[x index = 0, y index = 2] {\tableSIM};

    \addplot+[lines, matlabblue, mark = none]
      table[x index = 0, y index = 3] {\tableSIM};
    \addplot+[lines, matlabblue, mark = none, dashed]
      table[x index = 0, y index = 4] {\tableSIM};
  \end{axis}
\end{tikzpicture}%
  \tikzexternaldisable%

    \vspace{-.5\baselineskip}
    \caption{\FOM{}}
    \label{fig:tubularreactor_sim_FOM}
  \end{subfigure}%
  \hfill%
  \begin{subfigure}[b]{.495\textwidth}
    \centering
  \tikzexternalenable%
  \tikzsetnextfilename{tubularreactorProj}%
  \begin{tikzpicture}[font = \plotfontsize, text = black]
  \pgfplotstableread{graphics/data/tubularreactorProj.dat}\tableSIM

  \begin{axis}[
    scale only axis,
    width        = .775\linewidth,
    height       = .325\linewidth,
    xmin         = 0,
    xmax         = 30,
    ymin         = 0,
    ymax         = 1.25,
    xminorticks  = false,
    yminorticks  = false,
    xlabel       = {time $\tau \cdot t$},
    ylabel       = {amplitude},
    ylabel style = {yshift = -.4em},
    xlabel style = {yshift = .4em}
  ]
  
    \addplot+[lines, black, mark = none]
      table[x index = 0, y index = 1] {\tableSIM};
    \addplot+[lines, black, mark = none, dashed]
      table[x index = 0, y index = 2] {\tableSIM};

    \addplot+[lines, matlaborange, mark = none]
      table[x index = 0, y index = 3] {\tableSIM};
    \addplot+[lines, matlaborange, mark = none, dashed]
      table[x index = 0, y index = 4] {\tableSIM};
  \end{axis}
\end{tikzpicture}%
  \tikzexternaldisable%

    \vspace{-.5\baselineskip}
    \caption{\Proj{}}
    \label{fig:tubularreactor_sim_Proj}
  \end{subfigure}

  \vspace{.5\baselineskip}
  \begin{subfigure}[b]{.495\textwidth}
    \centering
  \tikzexternalenable%
  \tikzsetnextfilename{tubularreactorROM}%
  \begin{tikzpicture}[font = \plotfontsize, text = black]
  \pgfplotstableread{graphics/data/tubularreactorROM.dat}\tableSIM

  \begin{axis}[
    scale only axis,
    width        = .775\linewidth,
    height       = .325\linewidth,
    xmin         = 0,
    xmax         = 30,
    ymin         = 0,
    ymax         = 1.25,
    xminorticks  = false,
    yminorticks  = false,
    xlabel       = {time $\tau \cdot t$},
    ylabel       = {amplitude},
    ylabel style = {yshift = -.4em},
    xlabel style = {yshift = .4em}
  ]
  
    \addplot+[lines, black, mark = none]
      table[x index = 0, y index = 1] {\tableSIM};
    \addplot+[lines, black, mark = none, dashed]
      table[x index = 0, y index = 2] {\tableSIM};

    \addplot+[lines, matlabgreen, mark = none]
      table[x index = 0, y index = 3] {\tableSIM};
    \addplot+[lines, matlabgreen, mark = none, dashed]
      table[x index = 0, y index = 4] {\tableSIM};
  \end{axis}
\end{tikzpicture}%
  \tikzexternaldisable%

    \vspace{-.5\baselineskip}
    \caption{\ROM{}}
    \label{fig:tubularreactor_sim_ROM}
  \end{subfigure}%
  \hfill%
  \begin{subfigure}[b]{.495\textwidth}
    \centering
  \tikzexternalenable%
  \tikzsetnextfilename{tubularreactorROMProj}%
  \begin{tikzpicture}[font = \plotfontsize, text = black]
  \pgfplotstableread{graphics/data/tubularreactorROMProj.dat}\tableSIM

  \begin{axis}[
    scale only axis,
    width        = .775\linewidth,
    height       = .325\linewidth,
    xmin         = 0,
    xmax         = 30,
    ymin         = 0,
    ymax         = 1.25,
    xminorticks  = false,
    yminorticks  = false,
    xlabel       = {time $\tau \cdot t$},
    ylabel       = {amplitude},
    ylabel style = {yshift = -.4em},
    xlabel style = {yshift = .4em}
  ]
  
    \addplot+[lines, black, mark = none]
      table[x index = 0, y index = 1] {\tableSIM};
    \addplot+[lines, black, mark = none, dashed]
      table[x index = 0, y index = 2] {\tableSIM};

    \addplot+[lines, matlabpurple, mark = none]
      table[x index = 0, y index = 3] {\tableSIM};
    \addplot+[lines, matlabpurple, mark = none, dashed]
      table[x index = 0, y index = 4] {\tableSIM};
  \end{axis}
\end{tikzpicture}%
  \tikzexternaldisable%

    \vspace{-.5\baselineskip}
    \caption{\ROMProj{}}
    \label{fig:tubularreactor_sim_ROMProj}
  \end{subfigure}

  \vspace{.5\baselineskip}
  \tikzexternalenable%
  \tikzsetnextfilename{tubularreactor_legend}%
  \begin{tikzpicture}[font = \legendfontsize, text = black]
  \begin{axis}[%
    hide axis,
    width  = 1mm,
    height = 1mm,
    scale only axis,
    xmin = 0,
    xmax = 1,
    ymin = 0,
    ymax = 1,
    legend columns = 4, 
    legend style   = {
      at     = {(0,0)},
      anchor = center,
      /tikz/every even column/.append style = {column sep = 0cm}},
    legend cell align  = {left},
    clip mode          = individual
  ]

    \addlegendimage{lines, black, mark = none}
    \addlegendentry{/}
    \addlegendimage{lines, black, mark = none, dashed}
    \addlegendentry{\makebox[0pt][l]{uncontrolled temperature/concentration}}

    \addlegendimage{empty legend}
    \addlegendentry{}
    \addlegendimage{empty legend}
    \addlegendentry{}

    \addlegendimage{lines, matlabblue, mark = none}
    \addlegendentry{/}
    \addlegendimage{lines, matlabblue, mark = none, dashed}
    \addlegendentry{\FOM{}\hspace{2.7em}\phantom{pP}}

    \addlegendimage{lines, matlaborange, mark = none}
    \addlegendentry{/}
    \addlegendimage{lines, matlaborange, mark = none, dashed}
    \addlegendentry{\Proj{}}

    \addlegendimage{lines, matlabgreen, mark = none}
    \addlegendentry{/}
    \addlegendimage{lines, matlabgreen, mark = none, dashed}
    \addlegendentry{\ROM{}}

    \addlegendimage{lines, matlabpurple, mark = none}
    \addlegendentry{/}
    \addlegendimage{lines, matlabpurple, mark = none, dashed}
    \addlegendentry{\ROMProj{}}
  \end{axis}
\end{tikzpicture}%
  \tikzexternaldisable%

  \caption{The plots show the outputs of the simulations of the tubular reactor
    with a policy that is obtained after $0.2$\,h of training time
    with neural network architecture $(400, 300)$.
    The \FOM{} approach fails to stabilize the system within the short training
    time of $0.2$\,h because neither temperature nor concentration measurements
    are close to the steady state.
    Additionally, the temperature strongly oscillates.
    The other approaches learn a stabilizing policy within the training
    time of $0.2$\,h.
    Notice that deviations from the steady state decrease in the order of
    \Proj{}, \ROM{} and \ROMProj{}; thus the multi-fidelity
    approach \ROMProj{} achieves a stabilization closest
    to the desired steady state.}
  \label{fig:tubularreactor_sim}
\end{figure*}

To illustrate the results of applying the learned policies, we consider the
tubular reactor example and the best performing policies for the
neural network size $(400, 300)$ for the policy after $0.2$\,h of
training time.
In the following experiments, the initial condition is set to the steady state,
$x(0) = \xs$ and a random disturbance is applied up to the physical time
$\tau \cdot t = 0.3$ to trigger the system instabilities.
We plot the temperature profiles over time in
\Cref{fig:tubularreactor_temp_full}.
A stabilized system leads to a smooth temperature transition over the
space-time domain.
The states are strongly oscillating for \FOM{}, while the proposed methods
based on the unstable manifolds provide smooth, stabilized simulations.
To clearer see the oscillations, we plot the temperature and concentration at
the left end of the tube over time in \Cref{fig:tubularreactor_sim}.
The policy learned by \FOM{} steers the outputs away from the uncontrolled
oscillations but is clearly not stabilizing towards the considered steady
state, as the concentration deviates far from the initial condition and the
temperature is strongly oscillating.
In contrast, the policies obtained by the newly proposed methods are
stabilizing the system.
We can see minor deviations from the steady state in
\Cref{fig:tubularreactor_sim_Proj}, which become smaller moving to
\Cref{fig:tubularreactor_sim_ROM} and finally disappear in
\Cref{fig:tubularreactor_sim_ROMProj}.
The most stable results are given by \ROMProj{}, with an at least three orders
of magnitude smaller deviation from the desired state when compared to \FOM{}.


\subsubsection{Quantitative comparison}%
\label{sec:quantitative}

\begin{figure*}[t]
  \centering
  \tikzexternalenable%
  \tikzsetnextfilename{time_per_step_rewards_full}%
  \begin{tikzpicture}
  \node (time) {%
    \begin{tikzpicture}[font = \plotfontsize, text = black]
      \begin{axis}[
        scale only axis,
        width             = .775\linewidth,
        height            = .19\linewidth,
        ybar,
        bar width         = 6pt,
        ymin              = 0,
        ymax              = 0.0275,
        ylabel            = {training time per step (s)},
        ymajorgrids,
        ylabel style      = {yshift = -.4em},
        xtick             = data,
        symbolic x coords = {test11, test12, test21, test22, test31, test32},
        xticklabel style  = {yshift = .4em},
        point meta=explicit symbolic,
        clip = false,
        xticklabels       = {(20{,} 10), (400{,} 300), (20{,} 10), (400{,} 300),
                              (20{,} 10), (400{,} 300)},
        legend columns    = -1,
        legend cell align = {left},
        legend style      = {
          at                                    = {(.46,-0.4)},
          anchor                                = north,
          /tikz/every even column/.append style = {column sep = 0.25cm}},
        cycle list name = barlist
      ]

        \addplot+[ybar] coordinates{
          (test11, 0.004771)
          (test12, 0.017625)
          (test21, 0.010516)
          (test22, 0.025099)
          (test31, 0.006884)
          (test32, 0.020311)
        };

        \addplot+[ybar] coordinates{
          (test11, 0.001797)
          (test12, 0.008183)
          (test21, 0.006477)
          (test22, 0.013398)
          (test31, 0.003156)
          (test32, 0.010510)
        };

        \addplot+[ybar] coordinates{
          (test11, 0.000655)
          (test12, 0.006305)
          (test21, 0.000685)
          (test22, 0.006757)
          (test31, 0.000700)
          (test32, 0.007403)
        };

        \addplot+[ybar] coordinates{
          (test11, 0.002388)
          (test12, 0.014245)
          (test21, 0.007135)
          (test22, 0.020947)
          (test31, 0.004210)
          (test32, 0.017586)
        };
      \end{axis}

      \node at (1.95, -.7) [anchor = center] {\textbf{Allen-Cahn}};
      \node at (5.8, -.7) [anchor = center] {\textbf{Tubular reactor}};
      \node at (9.7, -.7) [anchor = center] {\textbf{Toda lattice}};
    \end{tikzpicture}};

  \node(rewards) [below = 0\baselineskip of time.south east,
    anchor = north east] {%
    \begin{tikzpicture}[font = \plotfontsize, text = black]
      \begin{axis}[
        scale only axis,
        width             = .775\linewidth,
        height            = .19\linewidth,
        ybar,
        bar width         = 6pt,
        ymode             = log,
        ymin              = 1e-4,
        ymax              = 1e+0,
        y dir             = reverse,
        ylabel            = {$\max(\logmean(\Rn))$},
        ymajorgrids,
        ylabel style      = {yshift = -.4em},
        xtick             = data,
        symbolic x coords = {test11, test12, test21, test22, test31, test32},
        yticklabels       = {x, $-10^{-3}$, $-10^{-1}$},
        xticklabel style  = {yshift = .4em},
        point meta        = explicit symbolic,
        clip              = false,
        xticklabels       = {(20{,} 10), (400{,} 300), (20{,} 10), (400{,} 300),
                              (20{,} 10), (400{,} 300)},
        legend columns    = -1,
        legend cell align = {left},
        legend style      = {
          at                                    = {(.46, -0.375)},
          anchor                                = north,
          /tikz/every even column/.append style = {column sep = 0.25cm}},
        cycle list name = barlist
      ]

        \addplot+[ybar] coordinates{
          (test11, 3.548515e-02)
          (test12, 1.131312e-01)
          (test21, 3.140066e-01)
          (test22, 8.526817e-01)
          (test31, 5.909171e-01)
          (test32, 4.462692e-01)
        };
        \addlegendentry{\legendfontsize \FOM{}}

        \addplot+[ybar] coordinates{
          (test11, 3.851234e-02)
          (test12, 7.241075e-03)
          (test21, 7.055078e-01)
          (test22, 3.404605e-01)
          (test31, 3.247149e-01)
          (test32, 4.881409e-01)
        };
        \addlegendentry{\legendfontsize \Proj{}}

        \addplot+[ybar] coordinates{
          (test11, 4.176828e-03)
          (test12, 4.327097e-04)
          (test21, 8.688416e-03)
          (test22, 3.680587e-02)
          (test31, 3.883624e-02)
          (test32, 1.652134e-02)
        };
        \addlegendentry{\legendfontsize \ROM{}}

        \addplot+[ybar] coordinates{
          (test11, 6.945772e-04)
          (test12, 2.170322e-04)
          (test21, 4.344140e-03)
          (test22, 2.468115e-03)
          (test31, 6.658587e-01)
          (test32, 2.764250e-01)
        };
        \addlegendentry{\legendfontsize \ROMProj{}}
      \end{axis}

      \node at (1.95, -.7) [anchor = center] {\textbf{Allen-Cahn}};
      \node at (5.8, -.7) [anchor = center] {\textbf{Tubular reactor}};
      \node at (9.7, -.7) [anchor = center] {\textbf{Toda lattice}};
    \end{tikzpicture}};
\end{tikzpicture}%
  \tikzexternaldisable%

  \caption{Top shows comparison of the mean training times per step.
    For all examples, \ROM{} has the lowest training time per step that is
    similar across all examples for the same neural network sizes.
    The method \Proj{} has the next larger training time per step, followed by
    \ROMProj{}.
    All new approaches require shorter training times than the
    classical \FOM{} method. Bottom shows  comparison of the best logarithmic
    means of normalized accumulated rewards after $0.02$\,h and $0.2$\,h of
    training time for small and large neural networks:
    In all examples, the \ROM{} initialization achieves up to two orders of
    magnitude better rewards than \FOM{}.
    \ROMProj{} further improves these rewards with the exception of the
    Toda lattice model, which contains a challenging nonlinear term, which
    affects the performance of the \ROMProj{} policy and due to which the
    dynamics blow up after only few time steps in the presence of
    instabilities.}
  \label{fig:time_per_step}
\end{figure*}
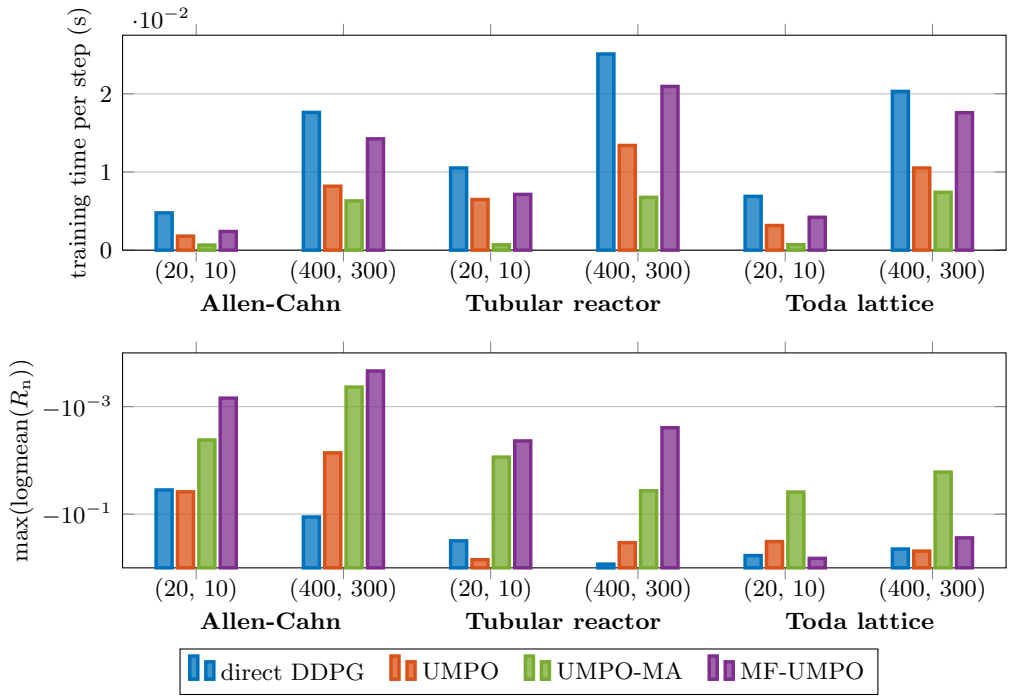

The plots in \Cref{fig:time_per_step} show the training time per
learning step and the maximum reward at the end of the learning for the
Allen-Cahn, the tubular reactor, and Toda lattice example, respectively.
The shown results are averages over the five policies of each network size
that stabilize the system closest to the steady state.
As can be seen in the top plot of \Cref{fig:time_per_step}, all new
approaches show an improvement in terms of their training time per step
compared to the classical \FOM{} method.
The time shown for \ROMProj{} contains the training of the initial
phase with \ROM{}, but it is still faster for both neural network sizes
compared to \FOM{}.
The bottom plot shows the best mean rewards that have
been reached after $0.02$\,h of training time using $(20, 10)$ networks and
after $0.2$\,h for the $(400, 300)$ networks.
For the smaller networks in the Allen-Cahn and tubular reactor examples,
\Proj{} does not surpass \FOM{} but yields similar rewards.
\Proj{} might improve for longer training times; however, since
\Proj{} provides faster training steps, the smaller neural networks may
still be preferred here despite the somewhat lower rewards.
\ROM{} and \ROMProj{} are surpassing \Proj{} and \FOM{} in the Allen-Cahn and
tubular reactor examples by at least one and up to two orders of magnitude.
The behavior of the methods changes in the Toda lattice example.
\ROM{} provides here the best rewards, which \ROMProj{} is not able to preserve
or to improve on despite using \ROM{} as initialization.
A possible explanation for these results comes from the strength of the
nonlinearity in the example.
Even small disturbances can have a catastrophic effect on the states of the
system leading to blowups after only few time steps.
Such disturbances are difficult to avoid in the training phase of reinforcement
learning via stochastic optimization methods, which means that large portions
of the data obtained from the nonlinear system for training the learning agents
are in unstable regimes already and thus unsuited for training.
However, we can see that for the $(20, 10)$ networks, \Proj{} yields
higher rewards than \FOM{} and, for $(400, 300)$, \ROMProj{} gives
higher rewards than \FOM{}.


\subsubsection{Dependence of performance on the state dimension}%
\label{sec:dimension}

\begin{figure*}[t]
  \centering
  \begin{subfigure}[b]{.495\linewidth}
    \centering
  \tikzexternalenable%
  \tikzsetnextfilename{training_times_1}%
  \begin{tikzpicture}[
  fill between/on layer = {main},
  font                  = \plotfontsize,
  text                  = black
]
  \pgfplotstableread{graphics/data/training_times_net=20_10.dat}\tableTIME
  
  \begin{axis}[%
    scale only axis,
    width           = .775\linewidth,
    height          = .325\linewidth,
    xmin            = 800,
    xmax            = 6200,
    ymin            = -0.2,
    ymax            = 2.8,
    xminorticks     = false,
    yminorticks     = false,
    xtick           = {998, 1998, 3998, 5998},
    xlabel          = {state dimension $N$\vphantom{Pp}},
    ylabel          = {training time (h)},
    ylabel style    = {yshift = -.4em},
    xlabel style    = {yshift = .4em},
    cycle list name = timelist
  ]

    \addplot+[name path = A] table[x index = 0, y index = 1] {\tableTIME};
    \addplot+[name path = B] table[x index = 0, y index = 2] {\tableTIME};
    \addplot+ fill between [of = A and B];
    \addplot+[mark = *, mark options = {fill = matlabblue}]
      table[x index = 0, y index = 3] {\tableTIME};

    \addplot+[name path = A] table[x index = 0, y index = 4] {\tableTIME};
    \addplot+[name path = B] table[x index = 0, y index = 5] {\tableTIME};
    \addplot+ fill between [of = A and B];
    \addplot+[mark = square*, mark options = {fill = matlaborange}]
      table[x index = 0, y index = 6] {\tableTIME};

    \addplot+[name path = A] table[x index = 0, y index = 7] {\tableTIME};
    \addplot+[name path = B] table[x index = 0, y index = 8] {\tableTIME};
    \addplot+ fill between [of = A and B];
    \addplot+[mark = triangle*, mark options = {fill = matlabgreen}]
      table[x index = 0, y index = 9] {\tableTIME};

    \addplot+[name path = A] table[x index = 0, y index = 10] {\tableTIME};
    \addplot+[name path = B] table[x index = 0, y index = 11] {\tableTIME};
    \addplot+ fill between [of = A and B];
    \addplot+[mark = diamond*, mark options = {fill = matlabpurple}]
      table[x index = 0, y index = 12] {\tableTIME};
  \end{axis}
\end{tikzpicture}%
  \tikzexternaldisable%

    \vspace{-.5\baselineskip}
    \caption{network architecture $(20, 10)$}
  \end{subfigure}%
  \hfill%
  \begin{subfigure}[b]{.495\linewidth}
    \centering
  \tikzexternalenable%
  \tikzsetnextfilename{training_times_2}%
  \begin{tikzpicture}[
  fill between/on layer = {main},
  font                  = \plotfontsize,
  text                  = black
]
  \pgfplotstableread{graphics/data/training_times_net=400_300.dat}\tableTIME
  
  \begin{axis}[%
    scale only axis,
    width           = .775\linewidth,
    height          = .325\linewidth,
    xmin            = 800,
    xmax            = 6200,
    ymin            = -0.2,
    ymax            = 6,
    xminorticks     = false,
    yminorticks     = false,
    xtick           = {998, 1998, 3998, 5998},
    xlabel          = {state dimension $N$\vphantom{Pp}},
    ylabel          = {training time (h)},
    ylabel style    = {yshift = -.4em},
    xlabel style    = {yshift = .4em},
    cycle list name = timelist
  ]

    \addplot+[name path = A] table[x index = 0, y index = 1] {\tableTIME};
    \addplot+[name path = B] table[x index = 0, y index = 2] {\tableTIME};
    \addplot+ fill between [of = A and B];
    \addplot+[mark = *, mark options = {fill = matlabblue}]
      table[x index = 0, y index = 3] {\tableTIME};

    \addplot+[name path = A] table[x index = 0, y index = 4] {\tableTIME};
    \addplot+[name path = B] table[x index = 0, y index = 5] {\tableTIME};
    \addplot+ fill between [of = A and B];
    \addplot+[mark = square*, mark options = {fill = matlaborange}]
      table[x index = 0, y index = 6] {\tableTIME};

    \addplot+[name path = A] table[x index = 0, y index = 7] {\tableTIME};
    \addplot+[name path = B] table[x index = 0, y index = 8] {\tableTIME};
    \addplot+ fill between [of = A and B];
    \addplot+[mark = triangle*, mark options = {fill = matlabgreen}]
      table[x index = 0, y index = 9] {\tableTIME};

    \addplot+[name path = A] table[x index = 0, y index = 10] {\tableTIME};
    \addplot+[name path = B] table[x index = 0, y index = 11] {\tableTIME};
    \addplot+ fill between [of = A and B];
    \addplot+[mark = diamond*, mark options = {fill = matlabpurple}]
      table[x index = 0, y index = 12] {\tableTIME};
  \end{axis}
\end{tikzpicture}%
  \tikzexternaldisable%

    \vspace{-.5\baselineskip}
    \caption{network architecture $(400, 300)$}
  \end{subfigure}

  \vspace{.5\baselineskip}
  \tikzexternalenable%
  \tikzsetnextfilename{training_times_legend}%
  \begin{tikzpicture}[font = \legendfontsize, text = black]
  \begin{axis}[%
    hide axis,
    width  = 1mm,
    height = 1mm,
    scale only axis,
    xmin = 0,
    xmax = 1,
    ymin = 0,
    ymax = 1,
    legend columns = -1, 
    legend style   = {
      at     = {(0,0)},
      anchor = center,
      /tikz/every even column/.append style = {column sep = 0.4cm}},
    legend cell align  = {left},
    clip mode          = individual,
    cycle list name    = timelist
  ]

    \pgfplotsset{cycle list shift = 3}
    \addplot+[mark = *, mark options = {fill = matlabblue}]
      coordinates{(0, 0)};
    \addlegendentry{\FOM{}}

    \pgfplotsset{cycle list shift = 6}
    \addplot+[mark = square*, mark options = {fill = matlaborange}]
      coordinates{(0, 0)};
    \addlegendentry{\Proj{}}
  
    \pgfplotsset{cycle list shift = 9}
    \addplot+[mark = triangle*, mark options = {fill = matlabgreen}]
      coordinates{(0, 0)};
    \addlegendentry{\ROM{}}

    \pgfplotsset{cycle list shift = 12}
    \addplot+[mark = diamond*, mark options = {fill = matlabpurple}]
      coordinates{(0, 0)};
    \addlegendentry{\ROMProj{}}
  \end{axis}
\end{tikzpicture}%
  \tikzexternaldisable%

  \caption{The training times of the \FOM{}, \Proj{}, \ROM{}, \ROMProj{}
    approaches are compared for different state-space dimensions of the
    tubular reactor example.
    The speedup obtained is about $20-25\%$ for the small networks and
    $70-75\%$ for the large networks.
    Notice that the training time of \ROM{} is independent of $\nh$ and
    therefore is constant in the shown plots.}
  \label{fig:training_times_full}
\end{figure*}
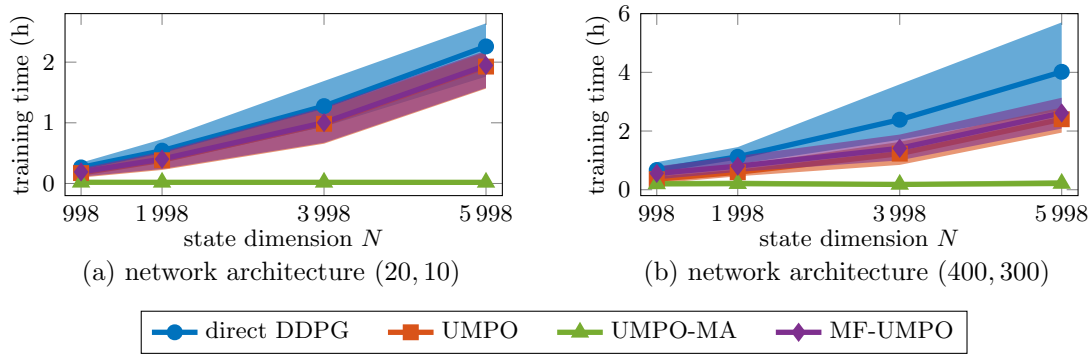

\Cref{fig:training_times_full} shows a comparison of the training times
for the two neural network sizes $(20, 10)$ and $(400, 300)$ when applied to
finer and finer spatial discretizations of the tubular reactor model, which
directly corresponds to the state dimension $\nh$ of the system that is to be
stabilized.
For both neural network sizes, we can observe the performance improvement
of the new methods compared to \FOM{}.
For the smaller networks, the training time is clearly dominated by the
evaluation time of the high-dimensional system~\cref{eqn:sys} rather than
the optimization of the neural networks such that \Proj{} and \ROMProj{} give
only an improvement of about $20-25\%$ compared to \FOM{}.
This factor increases to around $70-75\%$ for larger neural networks, which
indicates that the proposed methods have increasing benefit as the network
sizes grow.
The training times for \ROM{} do not depend on the full state dimension $\nh$
but on the number of unstable system modes $\nr = 2$, which is constant even as
discretizations are refined.
Therefore, these training times are constant up to measurement fluctuations in
\Cref{fig:training_times_full}.
The training times of \ROM{} mildly increase for the larger neural networks due
to the increase in parameters to learn, which is still negligible small
compared to \FOM{}, where the number of parameters in the neural network
parametrization scales with the state dimension $\nh$.
This demonstrates the power of the unstable manifold with a dimension that is
fixed even if the solution fields of the underlying governing equations are
ever finer resolved and thus the state dimension $\nh$ of the dynamical
system~\cref{eqn:sys} grows.


\section{Conclusions}%
\label{sec:conclusions}

In this work, we showed that the latent manifold of unstable dynamics is well
suited for finding stabilizing policies with reinforcement learning.
The unstable latent manifold can be leveraged either directly with
high-dimensional systems or with a two-step, multi-fidelity approach that first
trains on latent models and then fine-tunes and ultimately certifies the learned
policies on the high-dimensional systems.
The proposed approaches based on the unstable latent manifold achieve rewards
that are several orders of magnitude higher than the rewards achieved with
classical approaches without the unstable latent manifold within a given fixed
training time.

While the new methods have been shown to work well in the presented numerical
examples, there are limitations to them.
We considered a small number of unstable system modes, which allowed
to consider low-dimensional parametrizations of the policy.
This is not the case in all practical applications.
In particular for chaotic systems, where the dimension of the unstable
manifold scales with $\Ocal(\nh)$, the new approach is not expected
to yield performance improvements.


\section*{Acknowledgments}%
\addcontentsline{toc}{section}{Acknowledgments}

The authors were supported by the Air Force Office of Scientific Research
(AFOSR) award FA9550-21-1-0222 (Dr. Fariba Fahroo) and the second author was
additionally supported by the National Science Foundation under Grant
No.~2012250 and No.~2046521.
Parts of this work were carried out while the first author was with the
Courant Institute of Mathematical Sciences, New York University, USA.


\addcontentsline{toc}{section}{References}
\bibliographystyle{plainurl}
\bibliography{bibtex/myref}

\end{document}